\numberwithin{equation}{section}
\def\edge{\ar@{-}}
\def\dedge{\ar@{.}}
\theoremstyle{plain}
\newtheorem{theorem}{Theorem}[section]
\newtheorem{proposition}[theorem]{Proposition}
\newtheorem{lemma}[theorem]{Lemma}
\newtheorem{corollary}[theorem]{Corollary}
\theoremstyle{definition}
\newtheorem{definition}[theorem]{Definition}
\newtheorem{example}[theorem]{Example}
\newtheorem{notation}[theorem]{Notation}
\def\widebar{\overline}
\def\ch{{\mathcal H}}
\def\spec{{\rm Spec}}
\def\hspec{\ch\text{-}\spec} 
\def\mn{{\mathbb N}}
\def\mr{{\mathbb R}}
\def\mz{{\mathbb Z}}
\DeclareMathOperator{\Tdeg}{{\rm Tdeg}}
\def\mz{{\mathbb Z}}
\newcommand{\gc}{ [ \hspace{-0.65mm} [}
\newcommand{\dc}{]  \hspace{-0.65mm} ]}
\newcommand \hp {\vartriangleleft'_\ch}
\def\ch{{\mathcal H}}
\def\co{{\mathcal O}}
\def\oqmm13{\co_q(M_{1,3})}
\def\oqm23{\co_q(M_{2,3})}
\newcommand{\la}{\lambda}
\newcommand{\id}{\text{id}}
\newcommand{\be}{\begin{equation}}
\newcommand{\ee}{\end{equation}}
\newcommand{\ol}{\overline}
\newcommand\mk{{\mathbb K}}
\newcommand\gk{{\rm GK}}
\newcommand\height{{\rm ht}}
\newcommand\Spec{{\rm Spec}}
\DeclareMathOperator\Fract{Fract}
\newcommand\Rbar{\overline{R}}
\newcommand\sighat{\widehat{\sigma}}
\newcommand\tauhat{\widehat{\tau}}
\newcommand\bfq{\mathbf{q}}
\newcommand\ce{\mathcal{E}}
\newcommand\cf{\mathcal{F}}
\newcommand\Obfq{{\mathcal O}_\bfq}
\DeclareMathOperator\Ld{Ld}
\newcommand\gfrak{{\frak g}}
\newcommand\mfrak{{\frak m}}
\DeclareMathOperator\chr{char}
\begin{document}

\title{Tauvel's height formula for 
quantum nilpotent algebras}
\author{K R Goodearl\thanks{\,The research of the first named author was supported
by US National Science Foundation grant DMS-1601184.},~~S Launois\thanks{\,The research of the second named author was supported by EPSRC grant EP/N034449/1.}~~and
T H Lenagan\thanks{\,The research of the third named author was partially supported by a Leverhulme Trust Emeritus Fellowship.}}
\date{}

\maketitle

\begin{abstract} Tauvel's height formula, which provides a link between the height of a prime ideal and the Gelfand-Kirillov dimension of the corresponding factor algebra, is 
verified for quantum nilpotent algebras. 
\end{abstract}


\section{Introduction}

Quantum nilpotent algebras, originally introduced under the name 
 Cauchon-Goodearl-Letzter extensions \cite{llr-ufd}, are iterated Ore extensions with special properties which cover a wide variety of algebras, including 
many of the algebras that appear as quantised coordinate rings. Examples include 
quantum Schubert cell algebras, quantum matrix algebras, generic quantised coordinate rings
of affine, symplectic and euclidean spaces, and generic
quantised Weyl algebras.
The precise definition of a quantum nilpotent algebra is recalled in Section~\ref{section-cgl}. It is designed to allow application of both Cauchon's deleting derivations 
algorithm and the Goodearl-Letzter stratification theory for prime ideals. 

In studying the prime spectrum of an algebra $R$, key invariants for a prime ideal 
$P$ are its height, $\height(P)$, and  the Gelfand-Kirillov dimension of the 
factor algebra, $\gk(R/P)$. Tauvel's height formula 
\[
\gk(R/P)+\height(P) = \gk(R)
\]
provides a useful connection between these two invariants and it is of interest 
to know when this formula holds. 
The purpose of this paper is to show that Tauvel's height formula does hold for 
all quantum nilpotent algebras. 
 
Several  verifications of the height formula for particular classes of algebras have 
 proceeded by first proving that the algebras are catenary, see, for example, 
 \cite{c2, gl-cat, h1, llr2, oh, y1, y2}. In these papers, catenarity is demonstrated by first establishing 
 certain homological conditions and showing that normal separation holds 
 for the prime spectrum. While the homological conditions can 
easily be established for quantum nilpotent algebras, normal separation remains elusive at the moment, although we do conjecture that this condition holds for all quantum nilpotent algebras. 
 
 The approach taken in the present paper is to exploit Cauchon's deleting derivations algorithm to establish Tauvel's height formula for torus-invariant prime ideals of a quantum nilpotent algebra, then extend this to primitive ideals by using the Goodearl-Letzter 
 stratification theory by virtue of the fact that the primitive ideals are identifiable 
 as the maximal members in individual strata. Finally, the formula is established for arbitrary prime ideals via the link between the prime spectrum of a given stratum and the prime spectrum of an associated commutative Laurent polynomial algebra, where the formula is well-known. As far as we are aware, the approach we use (from torus-invariant primes to primitive ideals, then arbitrary primes)
has not been used before; so this result advertises the approach.



\section{Quantum nilpotent algebras and stratification theory}\label{section-cgl} 

Fix a base field $\mk$ throughout the paper. Let $N$ denote a positive integer and
 $R$ an iterated Ore extension of the form
\begin{equation}
R\ = \ \mk[x_1][x_2;\sigma_2,\delta_2]\cdots[x_N;\sigma_N,\delta_N],
\end{equation}
 where $\sigma_j$ is an automorphism of the $\mk$-algebra $R_{j-1}:=\mk[x_1][x_2;\sigma_2,\delta_2]\dots[x_{j-1};\sigma_{j-1},\delta_{j-1}]$
 and  $\delta_j$ is a $\mk$-linear $\sigma_j$-derivation of
 $R_{j-1}$ for all $j\in \gc 2 ,N \dc$. (When needed, we denote $R_0 := \mk$ and set $\sigma_1 := \id_\mk$, $\delta_1 := 0$.) In other words, $R$ is a skew polynomial ring whose multiplication is determined by:
 $$x_j a = \sigma_j(a) x_j + \delta_j(a)$$
 for all $j\in \gc 2 ,N \dc$ and $a \in R_{j-1}$. Thus $R$ is a noetherian domain.  Henceforth, we
 assume that $R$ is a quantum nilpotent algebra, as in the following definition. 
 
 \begin{definition}  \label{CGLdef}
The iterated Ore extension $R$ is said to be a \emph{quantum nilpotent algebra} or a \emph{CGL extension}
\cite[Definition 3.1]{llr-ufd} if it is 
equipped with a rational action of a $\mk$-torus $\ch = (\mk^*)^d$ 
by $\mk$-algebra automorphisms satisfying the following conditions:
\begin{enumerate}
\item[(i)]  The elements $x_1, \ldots, x_N$ are $\ch$-eigenvectors.
\item[(ii)] For every $j \in \gc 2,N \dc$, $\delta_j$ is a locally nilpotent 
$\sigma_j$-derivation of $R_{j-1}$. 
\item[(iii)] For every $j \in \gc 1,N \dc$, there exists $h_j \in \ch$ such that $(h_j\cdot)|_{R_{j-1}} = \sigma_j$ and
$h_j \cdot x_j = q_j x_j$ for some $q_j \in \mk^*$ which is 
not a root of unity. 
\end{enumerate}
(We have omitted the condition $\sigma_j \circ \delta_j = q_j \delta_j \sigma_j$ from the original definition, as it follows from the other conditions; see, e.g., \cite[Eq. (3.1); comments, p.694]{gy0}.) From (i) and (iii), there exist scalars $\la_{j,i} \in \mk^*$ such that $\sigma_j(x_i) = \la_{j,i} x_i$ for all $i < j$ in $\gc1,N \dc$.
\end{definition}

A two-sided ideal $I$ of $R$ is said to be {\em $\ch$-invariant} if $h\cdot I=I$ for all
$h \in \ch$.  An {\em $\ch$-prime ideal} of $R$ is a proper $\ch$-invariant ideal
$J$ of $R$ such that if  $J$ contains the product of two
$\ch$-invariant ideals of $R$ then $J$ contains at least one of them. We denote
by $\ch$-$\spec(R)$ the set of all $\ch$-prime ideals of $R$. Observe
that if $P$ is a prime ideal of $R$ then
\begin{equation}
(P:\ch)\ := \ \bigcap_{h\in \ch} h\cdot P
\end{equation}
(namely, the largest $\ch$-invariant ideal contained in $P$) is an $\ch$-prime ideal of
$R$. For any $\ch$-prime ideal $J$ of $R$, we denote
by $\spec_J (R)$ the {\em $\ch$-stratum} associated  to $J$; that is,  
\begin{equation}
\spec_J (R) :=\{ P \in \spec(R) \mbox{ $\mid$ } (P:\ch)=J \}.
\end{equation}
Then the $\ch$-strata of $\spec(R)$ form a partition of $\spec(R)$; that
is,
\begin{equation}
\label{eq:Hstratification}
 \spec(R) := \bigsqcup_{J \in \ch\mbox{-}\spec(R)}\spec_J(R).
 \end{equation}
This partition is the so-called {\em $\ch$-stratification} of $\spec(R)$. 

It follows from work of Goodearl and Letzter \cite[Proposition 4.2]{gl2} that every $\ch$-prime ideal of $R$ is completely prime, so $\ch$-$\spec(R)$ coincides with the set of $\ch$-invariant completely prime ideals of $R$. Moreover there are at most $2^N$ $\ch$-prime ideals in $R$. As a consequence,  the prime spectrum
of $R$ is partitioned into a finite number of parts, the $\ch$-strata. In case $R$ is \emph{torsionfree}, meaning that  the subgroup $\langle \lambda_{j,i} \mid 1\le i<j \le N \rangle$ of $\mk^*$ is torsionfree, all prime ideals of $R$ are completely prime \cite[Theorem 2.3]{gl3}.

For each $\ch$-prime ideal $J$ of $R$, the space $\spec_J(R)$ (equipped with the relative Zariski topology inherited from $\spec(R)$) is homeomorphic to ${\rm Spec}(\mk[z_1^{\pm 1},\ldots ,z_d^{\pm 1}])$ for some $d$ which depends on $J$ \cite[Theorems II.2.13 and II.6.4]{bg}, and the primitive ideals of $R$ are precisely the prime ideals that are maximal in their $\ch$-strata \cite[Theorem II.8.4]{bg}.


\section{Cauchon's deleting derivations algorithm}
\label{canonicalembedding}
\vskip 2mm


As we have seen in the previous section, the $\ch$-prime ideals of a quantum nilpotent algebra $R$ are key in studying the whole prime 
spectrum. Cauchon's deleting derivations algorithm \cite{c1}, which we summarise below, provides a powerful way of studying the $\ch$-prime 
ideals of $R$.

\subsection{Deleting derivations algorithm} In order to describe the prime spectrum of $R$, Cauchon \cite[Section 3.2]{c1} has
constructed an algorithm called the \emph{deleting derivations algorithm}. This algorithm constructs, for each $j =  N+1, N, \dots, 2 $, an $N$-tuple 
$(x_1^{(j)},\dots,x_N^{(j)})$ of elements of the division ring of fractions $\Fract(R)$ defined as follows:

\begin{enumerate}
\item When $j=N+1$, we set $(x_1^{(N+1)},\dots,x_N^{(N+1)}) :=(x_1,\dots,x_N)$.
\item Assume that $j<N+1$ and that the $x_i^{(j+1)}$ ($i \in \gc 1,N \dc$) are already constructed. 
Then it follows from \cite[Th\'eor\`eme 3.2.1]{c1} that $x_j^{(j+1)} \neq 0$ and, for each $i\in \gc 1,N \dc$, we set
$$x_i^{(j)} :=
\begin{cases}
x_i^{(j+1)} & \quad \mbox{ if }i \geq j \\
\displaystyle{\sum_{k=0}^{+\infty } \frac{(1-q_j)^{-k}}{[k]!_{q_j}} }
\delta_j^k \circ \sigma_j^{-k} (x_i^{(j+1)}) (x_j^{(j+1)})^{-k} & \quad
\mbox{ if }i < j,
\end{cases}$$
where $[k]!_{q_j}=[0]_{q_j} \times \dots \times [k]_{q_j}$ with 
$[0]_{q_j}=1$ and $[i]_{q_j}=1+q_j+\dots+q_j^{i-1}$ when $i \geq 1$. 
\end{enumerate}

For all $j \in \gc 2,N+1 \dc$, we denote by $R^{(j)}$ the subalgebra of $\Fract(R)$ generated by the $x_i^{(j)}$; that is,
$$R^{(j)}:= \mk\langle x_1^{(j)},\dots,x_N^{(j)} \rangle .$$

The following results were proved by Cauchon \cite[Th\'eor\`eme 3.2.1 and Lemme 4.2.1]{c1}. 
For $j \in \gc 2,N+1 \dc$, we have

\begin{enumerate}
\item $R^{(j)}$ is isomorphic to an iterated Ore extension of the form $$\mk[Y_1]\cdots[Y_{j-1};\sigma_{j-1},\delta_{j-1}][Y_j;\tau_j]\cdots[Y_N;\tau_N]$$ 
by an isomorphism that sends $x_i^{(j)}$ to $Y_i$ ($1 \leq i \leq N$), where 
$\tau_j,\dots,\tau_N$ denote the $\mk $-linear automorphisms  such that 
$\tau_{\ell}(Y_i)=\lambda_{\ell,i} Y_i$ ($1 \leq i \leq \ell$).
\item Assume that $j \neq N+1$ and set $S_j:=\{(x_j^{(j+1)})^n ~|~  n\in \mathbb{N} \}=
\{(x_j^{(j)})^n  ~|~  n\in \mathbb{N} \}$.
This is a multiplicative system of regular elements of $R^{(j)}$ and $R^{(j+1)}$, that satisfies the Ore condition
in $R^{(j)}$ and $R^{(j+1)}$. Moreover we have 
\begin{equation}  \label{matchloc}
R^{(j)}S_j^{-1}=R^{(j+1)}S_j^{-1}.
\end{equation}
\end{enumerate}

It follows from these results that $R^{(j)}$ is a noetherian domain, for all $j\in \gc 2,N+1 \dc$.

As in \cite{c1}, we use the following notation.

\begin{notation}
We set $\Rbar:=R^{(2)}$ and $T_i:=x_i^{(2)}$ for all $i\in \gc 1,N \dc$.
\end{notation}

Note that $x_i^{(i+1)} = x_i^{(i)} = \cdots = x_i^{(2)} = T_i$ for $i \in \gc 1,N \dc$. Hence, the structure of $R^{(j)}$ as an iterated Ore extension can be expressed as
\begin{equation}  \label{R(j)}
R^{(j)} = \mk[x_1^{(j)}] \cdots [x_{j-1}^{(j)}; \sigma_{j-1}, \delta_{j-1}] [T_j; \tau_j] \cdots [T_N; \tau_N].
\end{equation}

It follows from \cite[Proposition 3.2.1]{c1} that  $\Rbar$ is a quantum affine space in the indeterminates $T_1,\dots,T_N$, that is, $\Rbar$ is an iterated Ore extension twisted only by automorphisms. It is for this reason that Cauchon used the expression ``effacement des d\'erivations''. More precisely, let $\Lambda=\left( \lambda_{i,j} \right) \in M_N(\mk ^*)$ be the
multiplicatively antisymmetric matrix where the  $\lambda_{j,i}$ with $i<j$ come from the quantum nilpotent algebra structure of $R$ (Definition \ref{CGLdef}). Thus,
$$\lambda_{j,i}= \begin{cases}
1 & \mbox{ if } i=j \\
\lambda_{i,j}^{-1} & \mbox{ if }i> j.
\end{cases}$$
 Then we have 
 \begin{equation}  \label{Rbar.qtorus}
 \Rbar= \mk \langle T_1,\dots,T_N \mid T_iT_j = \lambda_{i,j} T_j T_i \;\; \forall\, i,j \in \gc 1,N \dc \rangle =\co_{\Lambda}(\mk ^N). 
 \end{equation}

\subsection{Canonical embedding} The deleting derivations algorithm was used by Cauchon in order to relate the prime spectrum of a quantum nilpotent algebra $R$ to the prime spectrum of the associated quantum affine space $\Rbar$. More precisely, he  has used this algorithm to construct embeddings 
\begin{equation}
\varphi_j:{\rm Spec}(R^{(j+1)}) \longrightarrow {\rm Spec}(R^{(j)}) \qquad {\rm for ~} j \in \gc 2,N \dc.
\end{equation} 
Recall from \cite[Section 4.3]{c1} that these embeddings are defined as follows.

Let $P \in  {\rm Spec}(R^{(j+1)})$. Then 
$$ 
\varphi_j (P) := \left\{\begin{array}{ll} 
PS_j^{-1} \cap R^{(j)} & \mbox{ if } x_j^{(j+1)} \notin P \\
g_j^{-1} \left( P/\langle x_j^{(j+1)}\rangle \right) & \mbox{ if } x_j^{(j+1)} \in P, \\
\end{array}\right.
$$
where $g_j$ denotes the surjective homomorphism 
\begin{equation}  \label{gjdef}
g_j:R^{(j)}\rightarrow R^{(j+1)}/\langle x_j^{(j+1)}\rangle \quad \text{defined by} \quad g_j(x_i^{(j)}):=x_i^{(j+1)} + \langle x_j^{(j+1)}\rangle \ \ \forall\, i \in \gc 1,N \dc.
\end{equation}
 (For more details see \cite[Lemme 4.3.2]{c1}.)  It was proved in \cite[Proposition 4.3.1]{c1} that $\varphi_j$ induces an inclusion-preserving and -reflecting homeomorphism from the topological space 
 $$\{P \in  {\rm Spec}(R^{(j+1)}) \mid x_j^{(j+1)} \notin P \}$$
  onto 
  $$\{Q \in  {\rm Spec}(R^{(j)}) \mid x_j^{(j)} \notin Q \};$$ 
also, $\varphi_j$ induces an inclusion-preserving and -reflecting homeomorphism from $$\{P \in  {\rm Spec}(R^{(j+1)}) \mid x_j^{(j+1)} \in P \}$$ onto its image under $\varphi_j$. Note however that, in general, $\varphi_j$ is not a homeomorphism from ${\rm Spec}(R^{(j+1)})$ onto its image.

Composing these embeddings, we get an embedding 
\begin{equation} 
\varphi:=\varphi_2 \circ \dots \circ \varphi_N : 
{\rm Spec}(R) \longrightarrow {\rm Spec}(\Rbar), 
\end{equation} 
which is called the  \emph{canonical embedding} from ${\rm Spec}(R)$ into 
${\rm Spec}(\Rbar)$. 
 
\subsection{Cauchon diagrams}  For any subset $w$ of $\{1,\dots,N\}$, let $K_w$ denote the $\ch$-prime ideal of 
 $\widebar{R}$  generated by the $T_i$ with $i\in w$.
 A subset $w\subseteq\{1,\dots,N\}$ is said to be a {\em Cauchon diagram} for $R$ if 
\[
K_w= \langle T_i\mid i\in w\rangle \in \varphi(\hspec(R)),
\]
in which case we denote by $J_w$ the unique $\ch$-prime ideal of $R$ such that
$$
\varphi(J_w) = K_w \,.
$$

A useful way to represent a Cauchon diagram $w$ is as follows. 
Draw $N$ boxes in a row, and  colour the $i$-th box black if and only 
$i\in w$; the remaining boxes are coloured white. For example, if $N=5$ and $w=\{1,2,5\}$ we draw the diagram

\begin{center}

\begin{tikzpicture}[xscale=1, yscale=1]


\draw[color=gray] (0,2) rectangle (1,3);            
\draw[color=gray] (1,2) rectangle (2,3);            
\draw[color=gray] (2,2) rectangle (3,3);            
\draw[color=gray] (3,2) rectangle (4,3);            
\draw[color=gray] (4,2) rectangle (5,3);            


\draw[fill=gray] (0,2) rectangle (1,3);               
\draw[fill=gray] (1,2) rectangle (2,3);               
\draw[fill=gray] (4,2) rectangle (5,3);               


\end{tikzpicture}
\end{center}
(In fact, the term ``Cauchon diagram'' originates from Cauchon's use of a related representation in the case of quantum matrices.) We write $\#{\rm black}(w)$ and $\#{\rm white}(w)$ for the number of black and white boxes, respectively, in a Cauchon diagram $w$. That is, $\#{\rm black}(w) = |w|$ and $\#{\rm white}(w) = N - |w|$. 

In Section~\ref{building} we will  investigate a way in which we can recolour boxes in a given Cauchon diagram so that the recoloured diagram is still a Cauchon diagram. This will provide us with a way of constructing descending chains of $\ch$-prime ideals of $R$. 

We shall need the fact that the deleting derivations process is $\ch$-equivariant, as we now indicate. The action of $\ch$ on $R$ of course extends to an action of $\ch$ on $\Fract(R)$ by $\mk$-algebra automorphisms (although this action is not rational). Given any $\ch$-eigenvector $v \in \Fract(R)$, denote by $\chi(v)$ the $\ch$-eigenvalue of $v$, so that $h\cdot v = [\chi(v)(h)] v$ for all $h \in \ch$.

\begin{lemma}  \label{chi.xji}
For all $j \in \gc 2,N+1 \dc$ and $i \in \gc 1,N \dc$, the element $x_i^{(j)}$ is an $\ch$-eigenvector with $\chi(x_i^{(j)}) = \chi(x_i)$.
\end{lemma}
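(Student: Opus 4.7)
The plan is to induct downward on $j$, from $j=N+1$ to $j=2$. The base case $j=N+1$ is immediate since $x_i^{(N+1)} = x_i$, which is an $\ch$-eigenvector by Definition~\ref{CGLdef}(i).

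For the inductive step, assume that every $x_i^{(j+1)}$ is an $\ch$-eigenvector with $\chi(x_i^{(j+1)}) = \chi(x_i)$. When $i \ge j$ the equation $x_i^{(j)} = x_i^{(j+1)}$ gives the conclusion trivially, so the nontrivial case is $i < j$, where we must analyse the defining sum
\[
x_i^{(j)} = \sum_{k=0}^{\infty} \frac{(1-q_j)^{-k}}{[k]!_{q_j}}\, \delta_j^k \sigma_j^{-k}(x_i^{(j+1)})\, (x_j^{(j+1)})^{-k}.
\]
Note the sum is effectively finite by the local nilpotence of $\delta_j$ on $R_{j-1}$, so no convergence issue arises; it is enough to show that each summand is an $\ch$-eigenvector with eigenvalue $\chi(x_i)$.

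The first ingredient is an auxiliary computation of how $\sigma_j$ and $\delta_j$ interact with the $\ch$-action on $R_{j-1}$. Because $\sigma_j = (h_j\,\cdot)|_{R_{j-1}}$, it sends any $\ch$-eigenvector $v$ to a scalar multiple of itself, so $\sigma_j(v)$ (and hence $\sigma_j^{-1}(v)$) is an $\ch$-eigenvector with the same eigenvalue $\chi(v)$. For $\delta_j$, apply an arbitrary $h \in \ch$ to both sides of $x_j v = \sigma_j(v) x_j + \delta_j(v)$ for an $\ch$-eigenvector $v \in R_{j-1}$; comparing the two resulting expressions and using that $\sigma_j(v)$ has eigenvalue $\chi(v)$ yields
\[
h \cdot \delta_j(v) = \chi(x_j)(h)\, \chi(v)(h)\, \delta_j(v),
\]
so $\delta_j$ sends $\ch$-eigenvectors to $\ch$-eigenvectors, shifting the eigenvalue by $\chi(x_j)$.

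Now apply this to the $k$-th summand. By the inductive hypothesis, $x_i^{(j+1)}$ and $x_j^{(j+1)}$ are $\ch$-eigenvectors with eigenvalues $\chi(x_i)$ and $\chi(x_j)$ respectively; moreover $x_i^{(j+1)} \in R_{j-1}$ for $i<j$ by the iterated Ore structure \eqref{R(j)}, so $\sigma_j$ and $\delta_j$ may be applied to it. Then $\sigma_j^{-k}(x_i^{(j+1)})$ has eigenvalue $\chi(x_i)$, applying $\delta_j^k$ shifts this to $\chi(x_j)^k \chi(x_i)$, and multiplying by $(x_j^{(j+1)})^{-k}$ (an $\ch$-eigenvector of eigenvalue $\chi(x_j)^{-k}$) produces an $\ch$-eigenvector of eigenvalue $\chi(x_j)^k \chi(x_i) \chi(x_j)^{-k} = \chi(x_i)$. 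Summing confirms $\chi(x_i^{(j)}) = \chi(x_i)$ and completes the induction. The only step that requires more than a direct invocation is the eigenvalue-shift rule for $\delta_j$, but this is a one-line calculation from the skew-polynomial relation, so no serious obstacle arises.
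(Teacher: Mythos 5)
Your overall strategy — showing directly that each summand in Cauchon's formula is an $\ch$-eigenvector of eigenvalue $\chi(x_i)$ by tracking eigenvalue shifts under $\sigma_j^{-k}$, $\delta_j^k$, and multiplication by $(x_j^{(j+1)})^{-k}$ — is a valid alternative to the paper's proof, which simply invokes the $\ch$-equivariance of Cauchon's homomorphism $\theta$ from \cite[Lemma 2.6]{llr-ufd}. Your version unpacks that citation into a calculation, which is perfectly reasonable.

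There is, however, a genuine imprecision that you need to repair. You assert that ``$x_i^{(j+1)} \in R_{j-1}$ for $i<j$ by the iterated Ore structure \eqref{R(j)}.'' This is false once $j \le N$: the elements $x_i^{(j+1)}$ live in $\Fract(R)$, not in $R_{j-1}$, since the earlier steps of the deleting-derivations algorithm introduce the inverses $(x_N^{(N+1)})^{-1}, \dots, (x_{j+1}^{(j+2)})^{-1}$. What is true is that the subalgebra $\mk\langle x_1^{(j+1)},\dots,x_{j-1}^{(j+1)}\rangle$ of $\Fract(R)$ is \emph{isomorphic} to $R_{j-1}$, and the $\sigma_j,\delta_j$ appearing in Cauchon's formula for $x_i^{(j)}$ are the corresponding automorphism and skew derivation of that subalgebra, not the original maps on $R_{j-1}$. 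Consequently, your derivation of the eigenvalue-shift rule — which is carried out using $x_j$, $R_{j-1}$, and the relation $x_j v = \sigma_j(v)x_j + \delta_j(v)$ in $R$ — is not literally the statement you need.

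The fix is routine but must be made. Carry out the same computation inside $R^{(j+1)}$: for an $\ch$-eigenvector $v \in \mk\langle x_1^{(j+1)},\dots,x_{j-1}^{(j+1)}\rangle$, use the Ore relation $x_j^{(j+1)} v = \sigma_j(v)\,x_j^{(j+1)} + \delta_j(v)$. Apply $h\in\ch$; by the inductive hypothesis each $x_\ell^{(j+1)}$ is an $\ch$-eigenvector with $\chi(x_\ell^{(j+1)})=\chi(x_\ell)$, so the $\ch$-action preserves the subalgebra $\mk\langle x_1^{(j+1)},\dots,x_{j-1}^{(j+1)}\rangle$ and the $x_j^{(j+1)}$-degree filtration, allowing you to compare coefficients of $x_j^{(j+1)}$. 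This yields $h\cdot\sigma_j(v)=\chi(v)(h)\,\sigma_j(v)$ and $h\cdot\delta_j(v)=\chi(x_j)(h)\chi(v)(h)\,\delta_j(v)$, which are exactly the shift rules you wanted — now for the maps that actually appear in the formula. (Alternatively, one can observe that the transported $\sigma_j$ scales each generator $x_i^{(j+1)}$ by $\lambda_{j,i}$, hence coincides with $(h_j\cdot)$ on that subalgebra by the inductive hypothesis, so it preserves $\ch$-eigenvalues.) With this correction, your argument closes up as claimed.
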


\begin{proof} We proceed by induction on $j = N+1,\dots,2$, the case $J=N+1$ holding trivially.

Assume that $j<N+1$ and that the statement holds for all $x_i^{(j+1)}$. The statement for $x_i^{(j)}$ then holds trivially in case $i \ge j$, so assume that $i<j$.

By \cite[Proposition 2.2]{c1}, the map $\theta : \mk\langle x_1^{(j+1)}, \dots, x_{j-1}^{(j+1)} \rangle \rightarrow \Fract(R)$ given by
$$\theta(a) = \displaystyle{\sum_{k=0}^{+\infty } \frac{(1-q_j)^{-k}}{[k]!_{q_j}} }
\delta_j^k \circ \sigma_j^{-k} (a) (x_j^{(j+1)})^{-k}$$
is a $\mk$-algebra homomorphism, and $\theta$ is $\ch$-equivariant by \cite[Lemma 2.6]{llr-ufd}. Therefore $x_i^{(j)} = \theta(x_i^{(j+1)})$ is an $\ch$-eigenvector with $\chi(x_i^{(j)}) = \chi(x_i^{(j+1)}) = \chi(x_i)$, as required.
\end{proof}

For any $j \in \gc 2,N \dc$, the algebra $R^{(j+1)}$ is generated by $\ch$-eigenvectors and its ideal $\langle x_j^{(j+1)} \rangle$ is $\ch$-invariant, so $R^{(j+1)}/\langle x_j^{(j+1)} \rangle$ inherits an induced $\ch$-action. In view of Lemma \ref{chi.xji}, we obtain the following

\begin{corollary}  \label{gjHequi}
For each $j \in \gc 1,N \dc$, the homomorphism $g_j : R^{(j)} \rightarrow R^{(j+1)}/\langle x_j^{(j+1)} \rangle$ of \eqref{gjdef} is $\ch$-equivariant.
\end{corollary}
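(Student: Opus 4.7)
The plan is to verify $\ch$-equivariance of $g_j$ by evaluating both sides of the defining equality on the algebra generators $x_i^{(j)}$ of $R^{(j)}$ and appealing directly to Lemma~\ref{chi.xji}.

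First I would note that the rational $\ch$-action on $R$ extends to a (not necessarily rational) $\ch$-action on $\Fract(R)$ by $\mk$-algebra automorphisms, and that each $R^{(j)}$ is $\ch$-stable since it is generated by the elements $x_i^{(j)}$, which are $\ch$-eigenvectors by Lemma~\ref{chi.xji}. In particular, $\langle x_j^{(j+1)} \rangle$ is an $\ch$-invariant ideal of $R^{(j+1)}$ (its generator being an $\ch$-eigenvector), so the quotient $R^{(j+1)}/\langle x_j^{(j+1)} \rangle$ carries a well-defined induced $\ch$-action for which the canonical projection $R^{(j+1)} \twoheadrightarrow R^{(j+1)}/\langle x_j^{(j+1)} \rangle$ is $\ch$-equivariant.

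Next, fix $h\in \ch$ and $i \in \gc 1,N \dc$. By Lemma~\ref{chi.xji}, both $x_i^{(j)}$ and $x_i^{(j+1)}$ are $\ch$-eigenvectors of common eigenvalue $\chi(x_i)$. Hence
\[
g_j(h \cdot x_i^{(j)}) = g_j\bigl( \chi(x_i)(h)\, x_i^{(j)} \bigr) = \chi(x_i)(h)\bigl( x_i^{(j+1)} + \langle x_j^{(j+1)} \rangle \bigr) = h \cdot g_j(x_i^{(j)}).
\]
Since $g_j$ is a $\mk$-algebra homomorphism and the $\ch$-actions on $R^{(j)}$ and $R^{(j+1)}/\langle x_j^{(j+1)} \rangle$ are by $\mk$-algebra automorphisms, the equality $g_j \circ h = h \circ g_j$ holds on a generating set and therefore on all of $R^{(j)}$, as required.

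I do not expect any real obstacle here: all of the substantive work has been carried out in Lemma~\ref{chi.xji}, and the present corollary is merely the translation of that statement into the language of the quotient map $g_j$. The only minor points of care are the well-definedness of the $\ch$-action on the quotient (which follows from $\ch$-invariance of $\langle x_j^{(j+1)} \rangle$) and the fact that it suffices to verify $\ch$-equivariance on the algebra generators $x_i^{(j)}$ of $R^{(j)}$.
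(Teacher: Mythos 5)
Your proof is correct and follows exactly the same route as the paper: the paper treats this corollary as immediate from Lemma~\ref{chi.xji} together with the observation that $\langle x_j^{(j+1)}\rangle$ is $\ch$-invariant (so the quotient inherits an $\ch$-action), and you have simply spelled out the verification on the algebra generators $x_i^{(j)}$.
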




\section{Gelfand-Kirillov dimension and transcendence degree} 

We denote the Gelfand-Kirillov dimension of a $\mk$-algebra $A$ by $\gk(A)$. A standard reference for results concerning Gelfand-Kirillov 
dimension is \cite{kl}. Three key results that we need are the following. 

\begin{theorem} \label{theorem-gkdim}
Let $R\ = \ \mk[x_1][x_2;\sigma_2,\delta_2]\cdots[x_N;\sigma_N,\delta_N]$ be a quantum nilpotent algebra. Then $\gk(R) = N$. 
\end{theorem}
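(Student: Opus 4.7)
The plan is to establish $\gk(R)=N$ by induction on $N$, using the standard formula for the Gelfand-Kirillov dimension of an Ore extension whose twisting map is an automorphism. The key input, recorded in Krause-Lenagan \cite{kl}, is that whenever $S$ is a finitely generated $\mk$-algebra of finite Gelfand-Kirillov dimension, $\sigma$ is a $\mk$-algebra automorphism of $S$, and $\delta$ is a $\sigma$-derivation of $S$, the Ore extension satisfies $\gk(S[x;\sigma,\delta]) = \gk(S)+1$. The hypothesis that $\sigma$ be an honest automorphism (and not merely an endomorphism) is essential for the upper bound.

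The base case is $R_1 = \mk[x_1]$, an ordinary polynomial ring of Gelfand-Kirillov dimension $1$. For the inductive step, suppose $\gk(R_{j-1}) = j-1$ for some $j \in \gc 2,N \dc$. Definition~\ref{CGLdef}(iii) guarantees that $\sigma_j = (h_j \cdot -)|_{R_{j-1}}$ is a $\mk$-algebra automorphism of $R_{j-1}$, and $R_{j-1}$ is finitely generated (by $x_1,\dots,x_{j-1}$) with finite Gelfand-Kirillov dimension by the inductive hypothesis. The Ore extension formula therefore applies and yields
$$\gk(R_j) \;=\; \gk\bigl(R_{j-1}[x_j;\sigma_j,\delta_j]\bigr) \;=\; \gk(R_{j-1})+1 \;=\; j.$$
Taking $j=N$ gives $\gk(R) = N$, as required.

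There is no real obstacle beyond verifying that the hypotheses of the Ore extension formula apply at each step, and the whole point of the CGL axioms is that they do: each $\sigma_j$ is an automorphism and each $R_{j-1}$ is finitely generated. An alternative route more in the spirit of the paper would start from the fact that the quantum affine space $\bar R = \co_\Lambda(\mk^N)$ plainly has $\gk(\bar R)=N$ and try to propagate this up the chain $R^{(2)}, R^{(3)}, \dots, R^{(N+1)}=R$ by means of the shared Ore localizations in \eqref{matchloc}; but each such step ultimately invokes the same Ore-extension formula, so the direct induction above is both shorter and cleaner.
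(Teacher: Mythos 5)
Your inductive strategy is the right shape, but the key lemma you invoke is not a theorem: it is \emph{not} true in general that $\gk(S[x;\sigma,\delta]) = \gk(S)+1$ whenever $S$ is a finitely generated $\mk$-algebra of finite GK-dimension, $\sigma$ is a $\mk$-algebra automorphism, and $\delta$ is a $\sigma$-derivation, and Krause--Lenagan do not claim this. What is true unconditionally is the lower bound $\gk(S[x;\sigma,\delta]) \ge \gk(S)+1$; the upper bound requires that $\sigma$ (and $\delta$) interact well with a finite-dimensional generating subspace — for instance, that $\sigma$ be locally algebraic and that $\delta$ send some finite-dimensional generating subspace into a bounded power of itself. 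A concrete counterexample to your claimed formula: let $S=\mk[y_1,y_2]$ and let $\sigma$ be the H\'enon automorphism $\sigma(y_1)=y_2$, $\sigma(y_2)=y_1+y_2^2$, with $\delta=0$. Then $\deg\sigma^j(y_2)=2^j$, and the normalized monomials $y_2\,\sigma^{j_1}(y_2)\cdots\sigma^{j_m}(y_2)\,x^{j_m}$ (for $1\le j_1<\cdots<j_m$) have pairwise distinct total degrees in $y_1,y_2$, so they are linearly independent; counting those with $m+1+j_m\le n$ shows $\dim V^n$ grows at least like a Fibonacci number, whence $\gk(S[x;\sigma])=\infty$ even though $\gk(S)+1=3$.

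This is precisely where the CGL axioms that you set aside as ``no real obstacle'' do the actual work. Definition~\ref{CGLdef}(iii) forces $\sigma_j$ to act \emph{diagonally} on the generators ($\sigma_j(x_i)=\lambda_{j,i}x_i$), so $\sigma_j$ is locally algebraic, and Definition~\ref{CGLdef}(ii) forces $\delta_j$ to be \emph{locally nilpotent}. Using the relation $\sigma_j\circ\delta_j=q_j\,\delta_j\circ\sigma_j$, one checks that the finite-dimensional subspace $V_j$ spanned by $1$, the $x_i$ with $i<j$, and the finitely many nonzero $\delta_j^k(x_i)$ is a generating subspace of $R_{j-1}$ that is invariant under $\sigma_j$, $\sigma_j^{-1}$ and $\delta_j$. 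It is exactly this verification that makes the Ore-extension GK-dimension formula (in the form given in \cite[Lemma~2.3]{lr}, which the paper cites) applicable at each step, and your proof is incomplete until you supply it. Once you do, the rest of your argument goes through as the paper intends.
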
 

\begin{proof} 
This follows easily from \cite[Lemma 2.3]{lr}.
\end{proof} 

\begin{theorem}\label{theorem-gkdrop} 
Let $A$ be a noetherian $\mk$-algebra and let $P$ be a prime ideal of $A$. Then  
\[
\gk(A/P) + \height(P) \le \gk(A).
\] 
\end{theorem}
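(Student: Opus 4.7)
The plan is to prove Theorem \ref{theorem-gkdrop} by induction on $h := \height(P)$. The base case $h=0$ is the well-known and elementary fact that the Gelfand-Kirillov dimension of a factor algebra is bounded by that of the ambient algebra: $\gk(A/P) \le \gk(A)$. For the inductive step it suffices to establish the following strict-monotonicity statement:
\begin{equation*}
\text{if } Q \subsetneq P \text{ are primes of a noetherian } \mk\text{-algebra } A, \text{ then } \gk(A/Q) \ge \gk(A/P) + 1.
\end{equation*}
Granted this, pick a saturated chain of primes $P_0 \subsetneq P_1 \subsetneq \dots \subsetneq P_h = P$ realizing the height of $P$; iterating strict monotonicity gives $\gk(A/P) \le \gk(A/P_0) - h$, and then the base case (applied to $P_0$) yields $\gk(A/P) + h \le \gk(A)$, as required.

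To prove strict monotonicity, first reduce modulo $Q$: replacing $A$ by $A/Q$ and $P$ by $P/Q$, we may assume that $A$ is a prime noetherian $\mk$-algebra and $P$ is a non-zero prime ideal. Goldie's theorem applied to the prime noetherian ring $A$ guarantees that every non-zero two-sided ideal contains a regular element, so we can pick a regular element $x \in P$. The canonical surjection $A/xA \twoheadrightarrow A/P$ gives $\gk(A/P) \le \gk(A/xA)$, so the desired inequality reduces to showing
\begin{equation*}
\gk(A/xA) \le \gk(A) - 1
\end{equation*}
whenever $x$ is a regular element of a noetherian $\mk$-algebra $A$.

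The step in the previous paragraph is the main technical obstacle, and it is precisely the classical ``GK-dimension drops by one modulo a regular element'' theorem. The proof relies on a careful filtration/growth-function analysis: the short exact sequence $0 \to A \xrightarrow{\cdot x} A \to A/xA \to 0$, together with the noetherian hypothesis, forces the Hilbert-type growth of $A/xA$ to be strictly slower than that of $A$ by at least one polynomial degree. Since this input is well-documented in the monograph of Krause and Lenzing, I would not reprove it from scratch but instead invoke it directly via the relevant proposition of \cite{kl}. Combining the invocation with the Goldie-theoretic reduction above and the height induction completes the argument; the noetherian hypothesis is used essentially, both in securing regular elements in non-zero ideals of the prime quotient and in the filtration argument of the cited growth-function lemma.
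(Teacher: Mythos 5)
Your argument is essentially the paper's: the proof there consists of citing \cite[Corollary 3.16]{kl} (Krause--Lenagan, not Lenzing), and your induction on height combined with the ``GK-dimension drops by at least one modulo an ideal containing a regular element'' lemma, with the regular element supplied by Goldie's theorem in the prime factor $A/Q$, is precisely the standard proof of that corollary. The only points to tidy are that $xA$ should be the two-sided ideal $AxA$ generated by $x$ (so that $A/AxA$ is an algebra surjecting onto $A/P$) and that the cited drop-by-one lemma is proved by a direct dimension count ($\dim V^{2n}\ge (n+1)\dim W^n$) rather than by the exact-sequence heuristic you sketch; with those adjustments the proposal is correct and matches the paper's (cited) route.
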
 

\begin{proof}  Noetherianness is more than is needed here -- the result holds if all prime factor rings of $A$ are right Goldie
\cite[Corollary 3.16]{kl}.
\end{proof} 

\begin{proposition}  \label{modfinext}
If $B \subseteq A$ are $\mk$-algebras such that $A$ is finitely generated as a right $B$-module, then
$$\gk (A) = \gk (B).$$
\end{proposition}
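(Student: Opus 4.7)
The plan is to establish $\gk(B)\le\gk(A)$ and $\gk(A)\le\gk(B)$ separately. The first inequality is immediate by monotonicity of Gelfand--Kirillov dimension under subalgebra inclusion: any finite-dimensional subspace of $B$ is also a finite-dimensional subspace of $A$ with identical powers, so the supremum defining $\gk(B)$ is dominated by that defining $\gk(A)$.

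For the reverse inequality, I will fix right $B$-module generators $a_1,\dots,a_m$ of $A$ and set $W=\sum_i\mk\,a_i$. Given any finite-dimensional subspace $\tilde U\subseteq A$, I can write $\tilde U\subseteq WV_0$ for some finite-dimensional $V_0\subseteq B$. I will then enlarge $V_0$ to a finite-dimensional subspace $V\subseteq B$ by adjoining (i) the span $\hat V_0$ of the coefficients $b_{ij}^k\in B$ appearing in $a_ia_j=\sum_k a_k b_{ij}^k$, and (ii) the span $V_0^{(1)}$ of the coefficients $c_{v,i}^j\in B$ appearing in $v\, a_i=\sum_j a_j c_{v,i}^j$ for $v$ running through a basis of $V_0$. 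These choices yield the two containments $W^2\subseteq WV$ and $V_0\cdot W\subseteq WV$.

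The heart of the argument will be a straightforward induction on $n$ showing $(WV_0)^n\subseteq WV^{2n-1}$ for all $n\ge1$; the inductive step reads
\[
(WV_0)^{n+1}\subseteq W(V_0W)V^{2n-1}\subseteq W(WV)V^{2n-1}=W^2V^{2n}\subseteq(WV)V^{2n}=WV^{2(n+1)-1}.
\]
Combining this with $\tilde U^n\subseteq(WV_0)^n$ gives $\dim\tilde U^n\le m\cdot\dim V^{2n-1}$, and since $\mk\langle V\rangle$ is a finitely generated subalgebra of $B$ with $\gk(\mk\langle V\rangle)\le\gk(B)$, taking $\limsup_n\log_n$ yields
\[
\limsup_n\log_n\dim\tilde U^n\le\limsup_n\log_n\dim V^{2n-1}\le\gk(\mk\langle V\rangle)\le\gk(B).
\]
Taking the supremum over all finite-dimensional $\tilde U\subseteq A$ then gives $\gk(A)\le\gk(B)$.

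The main subtlety is the following: a naive attempt to iterate the ``coefficient-collecting'' procedure $V_0\mapsto V_0^{(1)}\mapsto V_0^{(2)}\mapsto\cdots$ (each time recording the coefficients of $V_0^{(k)}\cdot a_i$) generally does not terminate, and the resulting subspaces $V_0^{(k)}$ may grow exponentially with~$k$, which would spoil the bound. The key observation that circumvents this is that no such iteration is needed: a single enlargement of $V_0$ to~$V$ suffices, because each inductive step absorbs the feedback from passing a $V_0$-factor past a $W$-factor into just one additional factor of $V$, keeping the exponent linear in $n$.
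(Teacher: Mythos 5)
Your proof is correct, and it reconstructs the standard argument behind Krause--Lenagan \cite[Proposition 5.5]{kl}, which is all the paper gives (a bare citation): write $\tilde U\subseteq WV_0$, enlarge $V_0$ to a single fixed $V$ so that $W^2\subseteq WV$ and $V_0W\subseteq WV$, and induct from the right to obtain $(WV_0)^n\subseteq WV^{2n-1}$. So you take essentially the same approach as the cited source, and your closing remark correctly identifies why applying the inductive hypothesis to the right-hand factor avoids any iterated coefficient-collecting and keeps the exponent linear in $n$.
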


\begin{proof} \cite[Proposition 5.5]{kl}.
\end{proof}

We shall also make use of the Gelfand-Kirillov transcendence degree of $\mk$-algebras $A$, denoted $\Tdeg(A)$. See \cite{kl} or \cite{z1}, for instance, for the precise definition. 

\begin{definition} \label{def.tdegstable}
A $\mk$-algebra $A$ is said to be $\Tdeg$-\emph{stable} \cite{z1} if the following hold:
\begin{enumerate}
\item[$\bullet$] $\gk(A)=\Tdeg(A)$. 
\item[$\bullet$] For every multiplicative system $S$ of regular elements  of
$A$ that satisfies the Ore condition, we have:  $\Tdeg(S^{-1}A)=\Tdeg(A)$.
\end{enumerate}
\end{definition}

A key instance of this property is

\begin{lemma}  \label{qtorTdegstable}
Every quantum torus is Tdeg-stable.
\end{lemma}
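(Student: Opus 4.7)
The plan is to verify the two bullets of Definition~\ref{def.tdegstable} directly for a quantum torus $A = \mk\langle T_1^{\pm 1},\dots,T_N^{\pm 1} \mid T_iT_j = \lambda_{i,j}T_jT_i \rangle$, exploiting the fact that $A$ has a $\mk$-basis of monomials $T^\alpha := T_1^{\alpha_1}\cdots T_N^{\alpha_N}$ indexed by $\alpha \in \mz^N$, with $T^\alpha T^\beta = \mu(\alpha,\beta)\,T^{\alpha+\beta}$ for a $2$-cocycle $\mu\colon \mz^N \times \mz^N \to \mk^*$. The computation $\gk(A) = N$ uses the standard frame $V := \mathrm{span}_\mk\{1, T_1^{\pm 1},\dots,T_N^{\pm 1}\}$: counting monomials in $V^n$ gives $\dim V^n = \Theta(n^N)$. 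Combined with the general inequality $\Tdeg \le \gk$, this already yields $\Tdeg(A) \le N$.

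For $\Tdeg(A) \ge N$, the decisive fact is that every monomial $T^\alpha$ is a \emph{normal} element of $A$: a short cocycle computation gives $T^\alpha T^\beta T^{-\alpha} \in \mk^*\,T^\beta$ for every $\beta \in \mz^N$. Hence conjugation by any monomial fixes $V$ setwise, so $\gk(\mk\langle T^\alpha V T^{-\alpha}\rangle) = N$ trivially. For a general nonzero $b = \sum_\gamma c_\gamma T^\gamma \in A$, a leading-term analysis in a term order on $\mz^N$ shows that $b\,T_i^{\pm 1}\,b^{-1}$, computed in the Goldie quotient of $A$, has the same leading monomial as $T_i^{\pm 1}$ up to a nonzero scalar, so $\mk\langle bVb^{-1}\rangle$ contains elements whose leading monomials generate $\mz^N$ as a monoid and hence has GK dimension at least $N$. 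This establishes the first bullet.

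For the Ore-localization condition, given an Ore system $S$ of regular elements and $B := S^{-1}A$, the task is $\Tdeg(B) = N$. Any finite-dimensional frame $W$ of $B$ admits a common denominator (by repeated application of the Ore condition): there exists $s \in S$ with $sW \subseteq A$, so $W \subseteq s^{-1}A$. Using again the normality of monomials in $A$, powers of $s^{-1}$ can be shuffled past monomials at the cost of only scalar factors, letting one bound $\dim W^n$ in terms of the dimension of a corresponding monomial subspace of $A$; this delivers $\Tdeg(B) \le N$. The reverse $\Tdeg(B) \ge N$ follows by applying the argument of the previous paragraph to the frame $V \subseteq A \subseteq B$ together with conjugating elements drawn from $B$.

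The main obstacle is the Ore-localization step: $\Tdeg$ behaves unpredictably under localization in general, and the plan above succeeds only because of the strong normality enjoyed by monomials in a quantum torus, which allows denominators to be moved past generators at the cost of scalars alone. A quicker route would be to cite \cite{z1}, where Zhang establishes $\Tdeg$-stability for quantum tori as part of a broader family of skew-polynomial-type algebras; the outline above sketches the core of that argument.
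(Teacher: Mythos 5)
The paper's proof is a one-line citation to Lorenz \cite[Corollary 2.2]{Lor} or Zhang \cite[Proposition 7.2]{z1}. Your proposal sketches a direct argument, which is a genuinely different route (and you correctly flag the citation as the shortcut the paper actually takes).

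There is, however, a definitional error that runs through your sketch: the Gelfand--Kirillov transcendence degree is
\[
\Tdeg(A) \;=\; \sup_{V}\;\inf_{b}\;\gk\bigl(\mk[\,bV\,]\bigr),
\]
the supremum over finite-dimensional frames $V\ni 1$ and the infimum over regular $b$, with the subalgebra generated by the \emph{left translate} $bV$ --- not the conjugate $bVb^{-1}$. Your version is vacuous: conjugation by a regular $b$ is a $\mk$-algebra isomorphism, so $\gk\bigl(\mk\langle bVb^{-1}\rangle\bigr)=\gk\bigl(\mk\langle V\rangle\bigr)$ for every $b$, the infimum does nothing, and ``$\Tdeg$'' would collapse to $\gk$ for every algebra, making both bullets of Definition~\ref{def.tdegstable} automatic. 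The entire point of the infimum over $b$ is that left translation can \emph{shrink} the subalgebra generated; your reading removes that possibility. This also makes your appeal to normality of monomials beside the point for this step.

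Ironically, with the correct definition your common-denominator idea works \emph{better} than you give it credit for, and no ``shuffling $s^{-1}$ past monomials'' is needed at all. Given a frame $W$ of $B=S^{-1}A$, choose $s\in S$ with $sW\subseteq A$ and take $b=s$; then $\mk[bW]=\mk[sW]\subseteq A$, whence $\gk(\mk[bW])\le\gk(A)=N$ and $\Tdeg(B)\le N$ immediately. The lower bound $\Tdeg(B)\ge N$ also goes through with $bV$ in place of $bVb^{-1}$ via the same valuation idea: fix a total order on $\mz^N$ compatible with addition and the associated leading-exponent map $v$ on $\Fract(A)^\times$; then the exponent values occurring in $(bV)^n$ contain $nv(b)+\{\alpha_1+\cdots+\alpha_n:\alpha_i\in\{0,\pm e_1,\dots,\pm e_N\}\}$, a translate of a box of cardinality $\Theta(n^N)$, so $\gk(\mk[bV])\ge N$ for every regular $b$. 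So the strategy is salvageable and in fact clean, but as written the proposal is built on a misstatement of $\Tdeg$, and the normality-based ``shuffle'' step it leans on for the localization bound is both unnecessary and (for non-monomial $s$) incorrect.
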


\begin{proof} 
\cite[Corollary 2.2]{Lor} or \cite[Proposition 7.2]{z1}.
\end{proof}

We excerpt the following key result from \cite{z1}.

\begin{proposition}  \label{transfer.Tdeg}
{\rm \cite[Proposition 3.5(4)]{z1}}
Let $A$ be a semiprime Goldie $\mk$-algebra and let $B \subseteq A$ be a semiprime Goldie subalgebra such that $\Fract B = \Fract A$. If $A$ is $\Tdeg$-stable, then $\gk(B) = \gk(A)$ and $B$ is $\Tdeg$-stable.
\end{proposition}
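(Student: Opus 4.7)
The plan is to follow Zhang's argument, which exploits the Tdeg-stability of $A$ to collapse a natural chain of inequalities relating the invariants of $B$, $A$, and their common ring of fractions $\Fract B = \Fract A$. The key observation is that we have the sandwich $B \subseteq A \subseteq \Fract A = \Fract B$, so both $A$ and $B$ sit between $B$ and a single division ring, which heavily constrains their invariants once one of them is pinned down by Tdeg-stability.

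First I would assemble the standard monotonicity statements. From $B \subseteq A \subseteq \Fract A$, subalgebra monotonicity of both $\gk$ and $\Tdeg$ yields $\gk(B) \le \gk(A)$ and $\Tdeg(B) \le \Tdeg(A) \le \Tdeg(\Fract A)$, while the universal bound $\gk(C) \le \Tdeg(C)$ holds for any semiprime Goldie algebra $C$. The Tdeg-stability hypothesis on $A$, applied first in its ``Ore'' form to the multiplicative set of all regular elements of $A$ and then in its ``$\gk = \Tdeg$'' form, produces $\Tdeg(\Fract A) = \Tdeg(A) = \gk(A)$. Since $\Fract B = \Fract A$, these combine into
$$\gk(B) \;\le\; \Tdeg(B) \;\le\; \Tdeg(A) \;=\; \Tdeg(\Fract B) \;=\; \gk(A).$$

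The main obstacle is the reverse bound $\Tdeg(\Fract B) \le \Tdeg(B)$, which does not follow from naive monotonicity since passing from $B$ to $\Fract B$ enlarges both the pool of finite-dimensional subframes and the pool of available regular elements in the definition of $\Tdeg$. The argument I would use is a denominator-clearing one: given a finite-dimensional subframe $V \subseteq \Fract B$ containing $1$, the Ore condition in $B$ supplies a regular $c \in B$ with $cV \subseteq B$, so that a subframe of $B$ can be manufactured from $V$ at the cost of multiplication by a regular element of $B$; a careful bookkeeping of the growth of $\gk(\mk[bV])$ then shows that the infimum over regular elements of $\Fract B$ is dominated by the corresponding infimum inside $B$. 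This is the technical heart of \cite[Proposition 3.5(4)]{z1} and the step I expect to be most delicate.

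Once $\Tdeg(\Fract B) \le \Tdeg(B)$ is in place, the displayed chain collapses to equalities: in particular $\gk(B) = \gk(A)$, settling the first conclusion. The first clause of Tdeg-stability for $B$, namely $\gk(B) = \Tdeg(B)$, is the leftmost of these equalities. The second clause, $\Tdeg(S^{-1}B) = \Tdeg(B)$ for every Ore set $S$ of regular elements of $B$, then falls out of the trivial sandwich $\Tdeg(B) \le \Tdeg(S^{-1}B) \le \Tdeg(\Fract B) = \Tdeg(B)$, completing the verification of Tdeg-stability.
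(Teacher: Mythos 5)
The paper does not reprove this result; it simply excerpts it from Zhang \cite[Proposition 3.5(4)]{z1}, so you are reconstructing Zhang's argument. Your overall strategy -- sandwich $B$ and $A$ between a common division ring of fractions and close a cycle of inequalities using the $\Tdeg$-stability of $A$ together with a denominator-clearing step -- is the right one, and identifying the denominator-clearing inequality $\Tdeg(\Fract B) \le \Tdeg(B)$ as the technical heart is correct. However, there is a genuine error in the direction of the universal bound between $\gk$ and $\Tdeg$, and it makes your chain fail to close. For any $\mk$-algebra $C$, the elementary inequality runs $\Tdeg(C) \le \gk(C)$, not $\gk(C) \le \Tdeg(C)$: in the definition $\Tdeg(C) = \sup_V \inf_b \gk(\mk[bV])$ one has $\mk[bV] \subseteq C$ whenever $V$ and $b$ live in $C$, so each $\gk(\mk[bV]) \le \gk(C)$. (If $\gk \le \Tdeg$ were universal, the first clause of $\Tdeg$-stability would be vacuous.) With the bound reversed, your displayed chain $\gk(B) \le \Tdeg(B) \le \cdots = \gk(A)$ only reproduces the already-known inequality $\gk(B) \le \gk(A)$; even after you inject $\Tdeg(\Fract B) \le \Tdeg(B)$, you never obtain the reverse inequality $\gk(B) \ge \gk(A)$, so the conclusion $\gk(B) = \gk(A)$ does not follow. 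The claimed ``subalgebra monotonicity'' $\Tdeg(B) \le \Tdeg(A) \le \Tdeg(\Fract A)$ is also not a theorem (the $\sup$ and $\inf$ in the definition move in opposite directions when the algebra enlarges), but it is not actually needed.

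The repair is to reverse the universal bound and run the cycle the other way around. Denominator clearing gives $\Tdeg(B) \ge \Tdeg(\Fract B)$, the universal bound gives $\Tdeg(B) \le \gk(B)$, subalgebra monotonicity of $\gk$ gives $\gk(B) \le \gk(A)$, and $\Tdeg$-stability of $A$ gives $\Tdeg(\Fract A) = \Tdeg(A) = \gk(A)$; since $\Fract B = \Fract A$ this yields
\[
\gk(A) \;=\; \Tdeg(\Fract B) \;\le\; \Tdeg(B) \;\le\; \gk(B) \;\le\; \gk(A),
\]
which forces equality throughout and gives both $\gk(B) = \gk(A)$ and $\gk(B) = \Tdeg(B)$. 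For the second clause of $\Tdeg$-stability, the final sandwich should also be flipped: denominator clearing applied to $B \subseteq S^{-1}B \subseteq \Fract B$ gives $\Tdeg(B) \ge \Tdeg(S^{-1}B) \ge \Tdeg(\Fract B)$, and since the outer two are now known to be equal, $\Tdeg(S^{-1}B) = \Tdeg(B)$.
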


An immediate application is that any quantum nilpotent algebra $R$ is $\Tdeg$-stable, since, by \cite[Theorem 4.6]{gy0}, $R$ is trapped between a quantum affine space $\co_\bfq(\mk^N)$ and the corresponding quantum torus $\co_\bfq((\mk^*)^N)$. In fact, $\Tdeg$-stability holds for all $\ch$-prime factors of $R$, by the same argument used in \cite[Proposition 1.3.2.2]{stephane} for the case of quantum matrices. This relies on a result of Cauchon \cite[Th\'eor\`eme 5.4.1]{c1} which shows that for any Cauchon diagram $w$ of $R$, the algebras $R/J_w$ and $\Rbar/K_w$ have isomorphic localizations. For later use, we require the following $\ch$-equivariant version of the result. 

Recall that the algebra $\Rbar/K_w$ is a quantum affine space with canonical generators given by the cosets of those $T_i$ with $i \in \gc 1,N \dc \setminus w$, where $N$ is the length of $R$. Let $\ce_w$ denote the multiplicative set in $\Rbar/K_w$ generated by $\{ T_i \mid i \in \gc 1,N \dc \setminus w \}$. Observe that $\ce_w$ is an Ore set in $\Rbar/K_w$ and that $(\Rbar/K_w)\ce_w^{-1}$ is a quantum torus of rank $N-|w|$.

\begin{theorem}  \label{matchloc2}
Let $R$ be a quantum nilpotent algebra of length $N$ and let $w$ be a Cauchon diagram for $R$. There exists an Ore set $\cf_w$ of regular $\ch$-eigenvectors in $R/J_w$ such that

{\rm(a)} There is an $\ch$-equivariant $\mk$-algebra isomorphism $(R/J_w)\cf_w^{-1} \rightarrow (\Rbar/K_w)\ce_w^{-1}$.

{\rm(b)} $\spec_{J_w}(R) = \{ P \in \spec(R) \mid P\supseteq J_w \ \text{and} \ (P/J_w) \cap \cf_w = \varnothing \}$.
\end{theorem}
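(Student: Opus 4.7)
My plan is to prove part (a) by induction on the deleting derivations algorithm, tracking $\ch$-equivariance at every step, then to deduce part (b) from (a) using standard $\ch$-stratification reasoning.

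For (a), for each $j \in \gc 2, N+1 \dc$ let $J_w^{(j)}$ denote the $\ch$-prime of $R^{(j)}$ corresponding to $w$ under $\varphi_j \circ \cdots \circ \varphi_N$, so $J_w^{(N+1)} = J_w$ and $J_w^{(2)} = K_w$. I proceed by induction on $j$ from $2$ up to $N+1$, constructing at each stage a multiplicative Ore set $\cf_w^{(j)}$ of regular $\ch$-eigenvectors in $R^{(j)}/J_w^{(j)}$ together with an $\ch$-equivariant $\mk$-algebra isomorphism $\Phi^{(j)} : (R^{(j)}/J_w^{(j)})(\cf_w^{(j)})^{-1} \longrightarrow (\Rbar/K_w)\ce_w^{-1}$. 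The base case $j = 2$ takes $\cf_w^{(2)} := \ce_w$ and $\Phi^{(2)} := \id$. The inductive step from $j$ to $j+1$ splits on whether $j \in w$. If $j \in w$, then $x_j^{(j+1)} \in J_w^{(j+1)}$, and \eqref{gjdef} together with Corollary \ref{gjHequi} shows that $g_j$ descends to an $\ch$-equivariant isomorphism $R^{(j)}/J_w^{(j)} \cong R^{(j+1)}/J_w^{(j+1)}$, along which I transport $\cf_w^{(j)}$ and $\Phi^{(j)}$. If $j \notin w$, then $x_j^{(j+1)} \notin J_w^{(j+1)}$, and \eqref{matchloc} combined with Lemma \ref{chi.xji} (which makes $S_j$ an $\ch$-invariant Ore set) yields, after passage to quotients, an $\ch$-equivariant identification $(R^{(j+1)}/J_w^{(j+1)}) S_j^{-1} = (R^{(j)}/J_w^{(j)}) S_j^{-1}$; I then enlarge $\cf_w^{(j)}$ to a multiplicative Ore set $\cf_w^{(j+1)}$ in $R^{(j+1)}/J_w^{(j+1)}$ consisting of $\ch$-eigenvectors, obtained by inverting the image of $x_j^{(j+1)}$ and lifting $\cf_w^{(j)}$ through the common localization (Lemma \ref{chi.xji} preserves the eigenvector property as products of eigenvectors are eigenvectors), and extend $\Phi^{(j)}$ to $\Phi^{(j+1)}$ accordingly. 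Setting $\cf_w := \cf_w^{(N+1)}$ completes the construction.

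For (b), recall that $\spec_{J_w}(R) = \{P \in \spec(R) : (P:\ch) = J_w\}$ and that $(P:\ch) \supseteq J_w$ whenever $P \supseteq J_w$. If some $c \in \cf_w$ lies in $P/J_w$, then the principal two-sided ideal $(c) \subseteq P/J_w$ is nonzero and $\ch$-invariant (as $c$ is an eigenvector), whence $((P/J_w):\ch) \neq 0$ and $P \notin \spec_{J_w}(R)$. Conversely, assume $(P/J_w) \cap \cf_w = \varnothing$ and set $Q := (P:\ch)/J_w$; then $Q$ is an $\ch$-invariant ideal of $R/J_w$ contained in $P/J_w$, hence disjoint from $\cf_w$, so its extension $Q(R/J_w)\cf_w^{-1}$ is a proper $\ch$-invariant ideal. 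Transporting via the isomorphism of (a), this corresponds to a proper $\ch$-invariant ideal of the quantum torus $(\Rbar/K_w)\ce_w^{-1}$, which is $\ch$-simple (a standard consequence of condition (iii) of the CGL definition, which makes the $\ch$-action on the quantum torus sufficiently generic). Hence the extension vanishes; since $\cf_w$ consists of regular elements, localization at $\cf_w$ is injective on ideals, so $Q = 0$ and $(P:\ch) = J_w$.

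The bulk of the work lies in part (a); the underlying non-equivariant matching of localizations is Cauchon's \cite[Théorème 5.4.1]{c1}, and the new content is the $\ch$-equivariance throughout. This reduces to two observations already available: the Ore denominators in $\cf_w^{(j)}$ are $\ch$-eigenvectors by Lemma \ref{chi.xji}, and the transition maps are $\ch$-equivariant by Corollary \ref{gjHequi} (case $j \in w$) and by the $\ch$-invariance of $S_j$ together with \eqref{matchloc} (case $j \notin w$). The point requiring most care is the selection of $\cf_w^{(j+1)}$ in the case $j \notin w$: one must keep the generators as $\ch$-eigenvectors while capturing both $S_j$ and suitable preimages of $\cf_w^{(j)}$ in the resulting localization, but this is forced by \eqref{matchloc} and Lemma \ref{chi.xji} once one observes that eigenvectors multiply to eigenvectors.
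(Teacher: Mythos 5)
Your proposal follows essentially the same route as the paper: defer the non-equivariant matching of localizations to Cauchon's Th\'eor\`eme 5.4.1 and the surrounding propositions, use Lemma \ref{chi.xji} and Corollary \ref{gjHequi} to propagate $\ch$-equivariance, and deduce part (b) via $\ch$-simplicity of the quantum torus (Cauchon's Lemme 5.5.3). Two small remarks. First, the paper's presentation of part (a) is slicker: Cauchon places all the intermediate algebras as literal subalgebras $A_2, \dots, A_{N+1}$ of the single division ring $\Fract(R/J_w)$, with the Ore sets shrinking by intersection ($\Sigma_{j+1} = \Sigma_j \cap A_{j+1}$) and the localizations $A_j\Sigma_j^{-1}$ all \emph{equal}, so only the single map $f_2 : \Rbar \to A_2$ needs to be checked for $\ch$-equivariance; your step-by-step construction of $\Phi^{(j)}$ re-derives this by hand. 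Second, your description of the $j \notin w$ step as \emph{enlarging} $\cf_w^{(j)}$ and ``lifting through the common localization'' points in the opposite direction from what actually happens: the Ore set is cut down by intersecting with the new subalgebra, not enlarged, and verifying that the resulting set still satisfies the Ore condition is a nontrivial point that Cauchon proves (Proposition 5.4.4) rather than something ``forced'' by \eqref{matchloc} and eigenvector multiplicativity alone. Since you do cite Cauchon for the underlying Ore-set machinery, this is a matter of phrasing rather than a genuine gap, but the language of enlargement would mislead a reader trying to reconstruct the construction.
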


\begin{proof} The Ore set we label $\cf_w$ is denoted $\Sigma_{N+1}$ in \cite[Subsection 5]{c1}, where we take $P=J_w$. It is obtained as the end result of a sequence of Ore sets $\Sigma_2, \dots, \Sigma_{N+1}$ in subalgebras $A_2\cong \Rbar/K_w,\dots,A_{N+1} = R/J_w$ of $\Fract(R/J_w)$, where $\Sigma_2$ is the image of $\ce_w$ in $A_2$. Specifically, $\Sigma_{j+1} = \Sigma_j \cap A_{j+1}$ for $j \in \gc2,N\dc$.

The action of $\ch$ on $R/J_w$ by automorphisms extends to an action on $\Fract(R/J_w)$ by automorphisms (although no longer rational).  As one notes, the elements of $\Sigma_2$ are regular $\ch$-eigenvectors, so the same holds for $\Sigma_{N+1} = \cf_w$. That $\Sigma_{N+1}$ satisfies the Ore condition in $A_{N+1}$ is proved in \cite[Proposition 5.4.4(2)]{c1}. 

(a) This isomorphism corresponds to an equality in \cite{c1}, due to identifications made in that paper.

In \cite{c1}, $\Rbar/K_w$ is identified with $A_2$ via a $\mk$-algebra epimorphism $f_2 : R_2 = \Rbar \rightarrow A_2$ with kernel $K_w$ \cite[Proposition 5.4.1(2)]{c1}. It follows from Lemma \ref{chi.xji} that $f_2$ is $\ch$-equivariant. Therefore the induced isomorphism $(\Rbar/K_w)\ce_w^{-1} \rightarrow A_2 \Sigma_2^{-1}$ is $\ch$-equivariant. Since $A_2 \Sigma_2^{-1} = A_{N+1} \Sigma_{N+1}^{-1} = (R/J_w) \cf_w^{-1}$ \cite[Proposition 5.4.4(3)]{c1}, we obtain (the inverse of) the desired $\ch$-equivariant isomorphism.

(b) If $P \in \spec_{J_w}(R)$, then $P/J_w$ contains no nonzero $\ch$-invariant ideals, so it cannot contain any $\ch$-eigenvectors. Hence, $(P/J_w) \cap \cf_w = \varnothing$. 

Conversely, suppose $P \in \spec(R)$ with $P \supseteq J_w$ and $(P/J_w) \cap \cf_w = \varnothing$. Then $I = (P:\ch)$ is a proper $\ch$-invariant ideal of $R$ such that $I \supseteq J_w$ and $(I/J_w) \cap \cf_w = \varnothing$, and so $(I/J_w)\cf_w^{-1}$ is a proper $\ch$-invariant ideal of $(R/J_w)\cf_w^{-1}$. Under the isomorphism in (a), $(I/J_w)\cf_w^{-1}$ corresponds to a proper $\ch$-invariant ideal $I'$ of $(\Rbar/K_w)\ce_w^{-1}$. However, by \cite[Lemme 5.5.3]{c1}, $(\Rbar/K_w)\ce_w^{-1}$ is an $\ch$-simple ring, i.e., it has no nonzero proper $\ch$-invariant ideals. Consequently, $I' = 0$, whence $(I/J_w)\cf_w^{-1} = 0$. This forces $I/J_w = 0$, that is, $(P:\ch) = I = J_w$. Therefore $P \in \spec_{J_w}(R)$.
\end{proof}

\begin{corollary}  \label{GKRmodJw}
If $R$ is a quantum nilpotent algebra of length $N$ and $w$ is a Cauchon diagram for $R$, then $R/J_w$ is $\Tdeg$-stable and $\gk(R/J_w) = N-|w| = \#{\rm white}(w)$.
\end{corollary}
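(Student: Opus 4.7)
The plan is to combine Theorem \ref{matchloc2}(a) with the transfer principle Proposition \ref{transfer.Tdeg}. By Theorem \ref{matchloc2}(a), there is an $\mk$-algebra isomorphism $(R/J_w)\cf_w^{-1} \cong (\Rbar/K_w)\ce_w^{-1}$. As noted in the paragraph preceding Theorem \ref{matchloc2}, the right-hand side is a quantum torus of rank $N-|w|$. By Lemma \ref{qtorTdegstable}, this quantum torus is $\Tdeg$-stable, and a standard computation (see \cite{kl}) shows that its Gelfand-Kirillov dimension equals its rank, $N-|w|$. Thus $A := (R/J_w)\cf_w^{-1}$ is $\Tdeg$-stable with $\gk(A) = N-|w|$.

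Next I would set $B := R/J_w$ and verify the hypotheses of Proposition \ref{transfer.Tdeg}. Since $J_w$ is an $\ch$-prime ideal of $R$, the result of Goodearl-Letzter cited in Section \ref{section-cgl} (after \eqref{eq:Hstratification}) implies that $J_w$ is completely prime, so $B$ is a noetherian domain, hence a semiprime Goldie algebra. The localization inclusion gives $B \subseteq A$, and because $A$ is an Ore localization of $B$ at a set of regular elements, one has $\Fract B = \Fract A$. Proposition \ref{transfer.Tdeg} then yields $\gk(B) = \gk(A) = N-|w|$ and that $B$ is $\Tdeg$-stable, which is exactly the conclusion of the corollary (using that $\#\text{white}(w) = N - |w|$ by definition).

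There is essentially no obstacle beyond assembling the previously established ingredients: the isomorphism of Theorem \ref{matchloc2}(a) replaces the factor $R/J_w$ by a localization which has the much more tractable structure of a quantum torus, and the transfer lemma then pushes $\Tdeg$-stability and the value of GK-dimension back down from the localization to $R/J_w$. The only point requiring a brief comment is the computation $\gk(\text{quantum torus of rank } m) = m$, which can be cited from \cite{kl} (and also follows from Proposition \ref{modfinext} applied to the inclusion of the corresponding quantum affine space, combined with Theorem \ref{theorem-gkdim}).
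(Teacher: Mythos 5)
Your proof is correct and follows essentially the same route as the paper: transport Tdeg-stability and GK-dimension from the quantum torus $(\Rbar/K_w)\ce_w^{-1}$ to $(R/J_w)\cf_w^{-1}$ via the isomorphism of Theorem \ref{matchloc2}(a), then pull them back to $R/J_w$ using Proposition \ref{transfer.Tdeg} together with $\Fract(R/J_w) = \Fract\bigl((R/J_w)\cf_w^{-1}\bigr)$. The extra verifications you spell out (complete primeness of $J_w$, the Goldie hypothesis, the GK-dimension of a quantum torus) are details the paper leaves implicit, but the argument is the same.
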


\begin{proof} By Lemma \ref{qtorTdegstable}, the quantum torus $(\Rbar/K_w)\ce_w^{-1}$ is $\Tdeg$-stable, and we observe that this algebra has GK-dimension $N-|w|$. In view of Theorem \ref{matchloc2}, the algebra $(R/J_w)\cf_w^{-1}$ has the same properties. Since $\Fract R/J_w = \Fract (R/J_w)\cf_w^{-1}$, Proposition \ref{transfer.Tdeg} yields the desired conclusions.
\end{proof}



\section{Building height of $\ch$-primes by using black boxes} \label{building}

Let $R\ = \ \mk[x_1][x_2;\sigma_2,\delta_2]\cdots[x_N;\sigma_N,\delta_N]$ be a quantum nilpotent algebra of length $N$. In Theorem \ref{theorem-gkdim} 
we have seen that  $\gk(R)=N$, and Theorem~\ref{theorem-gkdrop} says
that 
\[
\gk(R/P)+\height(P) \leq \gk(R) 
\] 
for each prime ideal $P$ of $R$. 
 We aim to prove that these inequalities are actually equalities.

In this section, we establish the height formula for $\ch$-prime ideals of $R$. As an abbreviation, we write $P\hp A$ to denote that $P$ is an $\ch$-prime ideal 
in $A$ for any ring $A$ equipped with a group action by $\ch$.

We will show that if we take the Cauchon diagram for an $\ch$-prime ideal $P$ of $R$ 
 and change the final black box to a white box then we again get the Cauchon diagram for an $\ch$-prime ideal and that 
 this new $\ch$-prime ideal 
 is necessarily contained in the original $\ch$-prime ideal. In this way, we can build a descending chain of $\ch$-prime ideals with length equal to the number of black boxes. This provides a lower bound for the height of $P$. Combining this with Corollary \ref{GKRmodJw} and Theorem \ref{theorem-gkdim}, we see that 
 Tauvel's height formula for the $\ch$-primes of $R$ then follows easily, and that the height of an $\ch$-prime $P$ of $R$ is equal to the 
 number of black boxes in the  Cauchon diagram for $P$.

\subsection{Black box removal}
Fix an $\ch$-prime ideal  $J_w$ of $R$, with $w$ nonempty. Let $k$ be the maximal member of $w$.  
We want to show that $w':= w\backslash\{k\}$ is a Cauchon diagram, so that there is an 
$\ch$-prime ideal $J_{w'}$ in $R$ with $\varphi(J_{w'})= K_{w'}$. In order to do this, we need to reverse the procedure described in Section~\ref{canonicalembedding}. We keep the notation of that section; in particular, $\widebar{R}$ denotes the quantum affine space that is reached at the end of the deleting derivations process, and $\varphi:\spec(R)\hookrightarrow \spec(\widebar{R})$ is the canonical embedding.

Recall the iterated Ore extension presentation of $R^{(j)}$ from \eqref{R(j)}. 
When $j\leq k$, because of the nature of the automorphisms $\tau_i$ with 
$i\geq j$, we may write (with a slight abuse of notation)
\[
R^{(j)} = A^{(j)}[T_k;\tau_k],
\]
where 
\[
A^{(j)}:= 
\mk [x_1^{(j)}]\cdots[x_{j-1}^{(j)};\sigma_{j-1},\delta_{j-1}][T_j;\tau_j]\cdots
[T_{k-1};\tau_{k-1}][T_{k+1};\tau_{k+1}]\cdots
[T_N;\tau_N].
\]
Here for $i>k$ we have written $\tau_i$ for the restriction of the original $\tau_i$ to the algebra 
$$
\mk \langle x_1^{(j)}, \dots, x_{j-1}^{(j)}, T_j, \dots, T_{k-1}, T_{k+1}, \dots, T_{i-1} \rangle.
$$

The following technical lemma, needed in the next result, gives a sufficient 
criterion for recognising when an $\ch$-ideal is induced from the base ring in an Ore extension endowed with a suitable $\ch$-action. 

\begin{lemma} \label{lemma-induced}
Let $B=A[X;\sigma]$ be an Ore extension of $\mk$-algebras, and assume that $\sigma$ extends to an automorphism $\sighat$ of $B$ such that $\sighat(X) = q X$ for some $q\in \mk^*$ which is not a root of unity. Let $I$ be a $\sighat$-invariant ideal 
of $B$, and suppose that $aX^n \in I$ implies $a\in I$, for any $a\in A$ and $n \in \mn$. Then
\begin{enumerate}
\item[\rm(i)] $I= (I\cap A)B$.
\item[\rm(ii)] The natural map $(A/(I\cap A))[\widebar{X};\widebar{\sigma}] \rightarrow B/I$ is an isomorphism.
\end{enumerate}
\end{lemma}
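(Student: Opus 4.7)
The inclusion $(I \cap A)B \subseteq I$ is immediate, so I focus on the reverse. I will show that every $f = \sum_{n=0}^N a_n X^n \in I$ has each coefficient $a_n \in I \cap A$. By the hypothesis (applied to each individual monomial $a_n X^n$), this reduces to proving that each $a_n X^n$ itself lies in $I$; equivalently, that $I$ is homogeneous with respect to the natural $\mathbb{Z}_{\ge 0}$-grading $B = \bigoplus_{n \ge 0} A X^n$.

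To prove this homogeneity, note first that the grading is preserved by $\sighat$, since $\sighat(A X^n) = \sigma(A)\, q^n X^n = A X^n$; hence $\sighat^k(f) = \sum_n \sigma^k(a_n)\, q^{kn}\, X^n$ belongs to $I$ for every $k \ge 0$. Because $q$ is not a root of unity, the scalars $q^0, q^1, \dots, q^N$ are pairwise distinct and the Vandermonde matrix $(q^{kn})_{0 \le k,n \le N}$ is invertible over $\mk$. The plan is to form a suitable $\mk$-linear combination of $f, \sighat(f), \dots, \sighat^N(f)$ using the inverse Vandermonde, and to combine this with an induction on $\deg_X(f) = N$: assuming the statement for elements of strictly smaller $X$-degree, the combination produces an element of $I$ from which one can conclude that the leading monomial $a_N X^N$ lies in $I$, since the lower-degree contributions are absorbed into $(I \cap A) B$ by the inductive hypothesis. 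The hypothesis of the lemma then yields $a_N \in I \cap A$, and subtracting $a_N X^N$ drops the degree so that the induction closes. The main obstacle is the twist by $\sigma^k$ in $\sighat^k(f)$: a plain Vandermonde over $\mk$ separates the $X$-powers but does not factor out the coefficients cleanly, so the argument has to interleave the Vandermonde extraction with the degree induction, using the inductive hypothesis to absorb the discrepancy between $\sigma^k(a_n)$ and $a_n$ in lower-degree terms.

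Part (ii) is routine once (i) is known. Indeed, $B$ is a free left $A$-module with basis $\{X^n\}_{n \ge 0}$, so $B/(I \cap A) B$ is a free left $A/(I \cap A)$-module with basis $\{\bar X^n\}_{n \ge 0}$, and the Ore relation $X a = \sigma(a) X$ descends to $\bar X\, \bar a = \bar\sigma(\bar a)\, \bar X$. Hence $B/I = B/(I \cap A) B$ is canonically isomorphic to the Ore extension $(A/(I \cap A))[\bar X; \bar\sigma]$, and this isomorphism is precisely the natural map in (ii).
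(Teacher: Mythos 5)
Your framing of part (i) as a homogeneity statement is correct and part (ii) is fine, but the core step of the argument has a genuine gap that your own caveat ("the main obstacle") does not close. A $\mk$-linear combination $\sum_k c_k \sighat^k(f) = \sum_n \bigl(\sum_k c_k q^{kn}\sigma^k(a_n)\bigr)X^n$ cannot isolate any single monomial $a_nX^n$, because the coefficients being combined are $\sigma^k(a_n)$ for varying $k$, not scalar multiples of a fixed $a_n$. Your proposed fix — ``absorb the discrepancy between $\sigma^k(a_n)$ and $a_n$ in lower-degree terms via the inductive hypothesis'' — does not apply: the discrepancy $\sigma^k(a_n) - a_n$ lives in degree $n$, the same degree you are trying to analyse, so there is no lower-degree term into which to push it. Moreover, an induction on $\deg_X f$ is not well-posed here, since the natural cancellation one forms does not drop the degree.

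The missing ingredient is left-multiplication by $X$ to realign the twists. The paper's proof takes a counterexample $b=\sum_{i=m}^n b_iX^i$ with \emph{spread} $n-m$ minimal, where $m$ (resp.\ $n$) is the least (resp.\ greatest) index with $b_i\notin I$, and forms
\[
\sighat(b)X - q^nXb = \sum_{i=m}^{n-1}(q^i - q^n)\,\sigma(b_i)\,X^{i+1}\in I,
\]
an element whose spread is strictly smaller (its support lies in $\{m+1,\dots,n\}$). By minimality all of its coefficients lie in $I$; as $q^i\ne q^n$ this gives $\sigma(b_i)\in I$ for $i<n$, hence $b_m = \sighat^{-1}(\sigma(b_m))\in I$, a contradiction. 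Notice that the degree of $\sighat(b)X-q^nXb$ can equal $\deg_X b$, which is exactly why induction on degree fails and the minimal-spread device is needed. If you prefer a Vandermonde argument, the right combination is not $\sighat^k(f)$ alone but $h_k := X^k\sighat^{-k}(f)\,X^{N-k} = \sum_{n=0}^N a_nq^{-kn}X^{n+N}\in I$: here the coefficients are the un-twisted $a_n$, the matrix $(q^{-kn})_{0\le k,n\le N}$ is a genuine Vandermonde, and one recovers each $a_nX^{n+N}\in I$, hence $a_n\in I$. Either way, the essential move you have not identified is multiplying by powers of $X$ before combining.
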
  

\begin{proof} (i) We adapt the proof of \cite[Lemma 2.2]{llr-ufd}. If (i) fails, there exists an element $b = \sum_i b_iX^i \in I$ with all $b_i \in A$ but some $b_j \notin I$. Set $m := \min \{ i \mid b_i \notin I \}$ and $n := \max \{ i \mid b_i \notin I \}$. Since $b_i \in I$ for $i \notin \gc m,n \dc$, we may remove terms with these indices from $b$, that is, there is no loss of generality in assuming $b = \sum_{i=m}^n b_i X^i$. 

If $m=n$, then $b_n X^n \in I$ with $b_n \notin I$, contradicting our hypotheses. Thus, $m<n$. Without loss of generality, $m-n$ is minimal among instances of elements with the properties of $b$. 

Now $I$ contains the elements
$$
Xb = \sum_{i=m}^n \sigma(b_i) X^{i+1} \qquad \text{and} \qquad \sighat(b) = \sum_{i=m}^n q^i \sigma(b_i) X^i,
$$
and so it also contains the element
$$
\sighat(b) X - q^n X b = \sum_{i=m}^{n-1} (q^i - q^n) \sigma(b_i) X^{i+1}.
$$
The minimality of $m-n$, together with the assumption that $q$ is not a root of unity, implies that $\sigma(b_i) \in I$ for all $i \in \gc m,n-1 \dc$. But then $b_m = \sighat^{-1}(\sigma(b_m)) \in I$, a contradiction.

(ii) This follows easily from (i), e.g. by using the comment in \cite[2.1(vi)]{gl1}.
\end{proof}



\begin{proposition} \label{largerCauchon}
 Let $w$ be a nonempty Cauchon diagram for $R$ and let $k := \max w$. The set  $w':=w\backslash\{k\}$ is a Cauchon diagram with $J_{w'}
\subsetneqq J_w$.
\end{proposition}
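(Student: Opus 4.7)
My plan is to construct $J_{w'}$ by surgery at level $R^{(k)}$ of Cauchon's algorithm, where the Ore decomposition $R^{(k)} = A^{(k)}[T_k;\tau_k]$ isolates $T_k$. First I trace $J_w$ downward: let $J_w^{(j)} \in \hspec(R^{(j)})$ denote the intermediate primes (so $J_w^{(N+1)} = J_w$ and $\varphi_j(J_w^{(j+1)}) = J_w^{(j)}$). An induction on $j$---using $T_k = x_k^{(j)}$ for $j \in \gc 2,k+1\dc$ and analyzing both cases of each $\varphi_{j-1}$ with $j-1 \neq k$---shows $T_k \in J_w^{(j)}$ throughout this range. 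Thus $\langle T_k\rangle \subseteq J_w^{(k)}$, and $I := J_w^{(k)}/\langle T_k\rangle$ is a completely prime $\ch$-invariant ideal of $A^{(k)} \cong R^{(k)}/\langle T_k\rangle$, $\tau_k$-invariant by normality of $T_k$ in $R^{(k)}$. I set $P^{(k)} := IR^{(k)}$. Then $R^{(k)}/P^{(k)} \cong (A^{(k)}/I)[T_k;\overline{\tau_k}]$, a domain, so $P^{(k)} \in \hspec(R^{(k)})$ with $T_k \notin P^{(k)}$ and $P^{(k)} \subsetneq J_w^{(k)} = P^{(k)} + \langle T_k\rangle$.

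From $P^{(k)}$ I proceed in two directions. Upward, I apply the localize lifts $P^{(j+1)} := P^{(j)} S_j^{-1} \cap R^{(j+1)}$ for $j = k,\dots,N$, yielding $J_{w'} := P^{(N+1)} \in \hspec(R)$; the localize regime applies at each step because $T_j \notin P^{(j)}$ inductively (for $j > k$, $T_j \in A^{(k)} \setminus I$ since $j \notin w$, and the commutations $T_j T_i = \lambda_{j,i} T_i T_j$ let the localize lifts preserve non-containment). Downward, I apply $\varphi_{k-1},\dots,\varphi_2$ to obtain $P^{(j)} \in \hspec(R^{(j)})$ for $2 \leq j \leq k$, and establish by downward induction the invariant $(\star)$: $P^{(j)} = I_j R^{(j)}$ with $I_j \in \hspec(A^{(j)})$, and $J_w^{(j)} = P^{(j)} + \langle T_k\rangle$. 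The key tool is Lemma~\ref{lemma-induced}, applied to $P^{(j)}$ in $R^{(j)} = A^{(j)}[T_k;\tau_k]$ with $\sighat$ the automorphism of $R^{(j)}$ extending $\tau_k$ by $\sighat(T_k) := q_k T_k$ (valid since $q_k$ is not a root of unity, Definition~\ref{CGLdef}(iii)): the hypothesis ``$aT_k^n \in P^{(j)} \Rightarrow a \in P^{(j)}$'' follows from $T_k$ being regular in the domain $R^{(j)}/P^{(j)}$, and the lemma yields $P^{(j)} = (P^{(j)} \cap A^{(j)})R^{(j)}$. The companion identity $J_w^{(j)} = P^{(j)} + \langle T_k\rangle$ propagates through $\varphi_{j-1}$ because $P^{(j)}$ and $J_w^{(j)}$ share the regime at step $j-1$ (both contain $T_{j-1}$ iff $j-1 \in w$). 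At the base $j = 2$, Lemma~\ref{lemma-induced} applied directly to $K_{w'}$ gives $K_{w'} = (K_{w'} \cap A^{(2)})\Rbar$, and the decomposition $K_w = K_{w'} + \langle T_k\rangle$ in $\Rbar$ forces $P^{(2)} = K_{w'}$.

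Combining the two chains, $\varphi(J_{w'}) = P^{(2)} = K_{w'}$, so $w'$ is a Cauchon diagram. The strict inclusion $J_{w'} \subsetneq J_w$ follows from $P^{(k)} \subsetneq J_w^{(k)}$: the decomposition $J_w^{(j)} = P^{(j)} + \langle T_k\rangle$ at every level $2 \leq j \leq k$ witnesses strict containment there, and the strictness propagates upward via the construction (alternatively, Corollary~\ref{GKRmodJw} gives $\gk(R/J_{w'}) = N - |w| + 1 > N - |w| = \gk(R/J_w)$, ruling out $J_{w'} \supseteq J_w$, while the inclusion $J_{w'} \subseteq J_w$ is seen by direct tracking). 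The main obstacle is the downward inductive verification of $(\star)$: one must carefully confirm that each Cauchon transition $\varphi_{j-1}$---whether the $S_{j-1}$-localization or the $\langle T_{j-1}\rangle$-quotient---respects both the induced-ideal form $I_j R^{(j)}$ of $P^{(j)}$ and the decomposition $J_w^{(j)} = P^{(j)} + \langle T_k\rangle$. Lemma~\ref{lemma-induced} is the structural device that reduces this compatibility, at each level, to the simple regularity of $T_k$ modulo $P^{(j)}$.
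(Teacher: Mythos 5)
Your architecture is genuinely different from the paper's --- the paper lifts $K_{w'}$ upward from level $2$ through all the $\varphi_j$, maintaining only the one-sided invariant $A^{(i)}\cap J_w^{(i)}\subseteq J_{w'}^{(i)}$, whereas you perform surgery at level $k$ and push a concrete candidate both up and down --- but as written the argument has two real holes.

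First, the invariance hypotheses needed to use Lemma \ref{lemma-induced} (and even to form $P^{(k)}$) are asserted, not established, and they do not follow from the reasons you give. For $P^{(k)}:=IR^{(k)}$ to be a two-sided ideal with $R^{(k)}/P^{(k)}\cong (A^{(k)}/I)[T_k;\overline{\tau_k}]$ you need $\tau_k(I)=I$; ``normality of $T_k$'' only gives $\tau_k(a)T_k=T_ka\in J_w^{(k)}$ for $a\in I$, which is vacuous since $\langle T_k\rangle\subseteq J_w^{(k)}$. Likewise each application of Lemma \ref{lemma-induced} to $P^{(j)}$ in $R^{(j)}=A^{(j)}[T_k;\tau_k]$ requires $P^{(j)}$ to be $\sighat$-invariant. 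The problem is that $\tau_k$ scales the generators $T_i$ with $i>k$ by $\lambda_{k,i}$, while Definition \ref{CGLdef}(iii) provides no element of $\ch$ realizing these scalars: $h_k$ acts on $T_i$ ($i>k$) by $\chi(x_i)(h_k)$, which need not equal $\lambda_{k,i}$. Hence $\ch$-invariance of $J_w^{(k)}$ or of $P^{(j)}$ does not deliver $\tau_k$- or $\sighat$-invariance, and your invariant $(\star)$ cannot get started, let alone propagate. This is exactly the difficulty the paper's proof is engineered around: it applies Lemma \ref{lemma-induced} only to the principal ideals $\langle T_s\rangle$ --- generated by normal $\ch$-eigenvectors, hence stable under any automorphism scaling the generators --- and only inside the subalgebra $R^{(s+1)}_k$ of generators of index at most $k$, where the extension $\tauhat_k=(h_k\cdot)$ of $\tau_k$ genuinely exists; the passage to the full algebra is then made via $\tau_{k+1},\dots,\tau_N$-invariance.

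Second, the containment $J_{w'}\subseteq J_w$ is not ``seen by direct tracking.'' At the step $j=k$ the two ideals pass through different regimes of $\varphi_k$: $T_k\in J_w^{(k)}$, so $J_w^{(k+1)}$ is related to $J_w^{(k)}$ through the quotient map $g_k$, while $T_k\notin P^{(k)}$, so $P^{(k+1)}$ is obtained by localization. The maps $\varphi_j$ preserve and reflect inclusions only within each regime (the paper warns that $\varphi_j$ is not a homeomorphism onto its image), so comparability at level $k$ does not transfer across this step by inspection; this is precisely why the paper invokes \cite[Theorem 1.4]{bl} for $J_{w'}\subsetneqq J_w$. Your GK-dimension argument correctly excludes $J_{w'}\supseteq J_w$ but does not supply the inclusion you actually need.
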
 

\begin{proof} 
If $w = \{k\}$, then $w' = \varnothing$. Clearly $\varphi(0) = 0 = K_\varnothing \in \Spec(\Rbar)$, so $w'$ is a Cauchon diagram and $J_{w'} = 0 \subsetneqq J_w$.
Hence, we may assume that $w \supsetneqq \{k\}$; in particular, $k\geq 2$. Note that if $s\neq k$ then $s\in w'$ if and only if $s\in w$.

As $w$ is a Cauchon diagram, there exist $\ch$-prime ideals $J_w^{(i)}\hp R^{(i)}$ 
for $i \in \gc 2,N+1 \dc$ such that $\varphi_i(J_w^{(i+1)})=J_w^{(i)}$ for $i \in \gc 2,N \dc$, where $J_w^{(N+1)} = J_w$ and $J_w^{(2)} = K_w$.  We aim to construct a corresponding sequence of $\ch$-prime ideals $J_{w'}^{(i)} \hp R^{(i)}$.

First, we show that there are $\ch$-prime ideals 
$J_{w'}^{(2)}\hp R^{(2)},\dots,J_{w'}^{(k)}\hp R^{(k)}$ with 
$A^{(i)}\cap J_{w}^{(i)}\subseteq J_{w'}^{(i)}$ for each $i \in \gc 2,k \dc$ 
and 
$\varphi_i(J_{w'}^{(i+1)})=J_{w'}^{(i)}$ for each $i \in \gc 2,k-1 \dc$.
To begin, set $J_{w'}^{(2)} :=K_{w'}$
and observe that $A^{(2)}\cap J_w^{(2)} = \langle T_j \mid j\in w' \rangle_{A^{(2)}}\subseteq J_{w'}^{(2)}$.

Now assume that $2\leq s<k$ and that $\ch$-prime ideals $J_{w'}^{(2)}\hp R^{(2)},\dots,J_{w'}^{(s)}\hp R^{(s)}$ have been defined with 
$A^{(i)}\cap J_{w}^{(i)}\subseteq J_{w'}^{(i)}$ for each 
$i \in \gc 2,s \dc$ and 
$\varphi_i(J_{w'}^{(i+1)})=J_{w'}^{(i)}$  for each $i \in \gc 2,s-1 \dc$. We need to find an $\ch$-prime ideal $J_{w'}^{(s+1)}$ in $R^{(s+1)}$ such that 
$A^{(s+1)}\cap J_{w}^{(s+1)}\subseteq J_{w'}^{(s+1)}$ and 
$\varphi_s(J_{w'}^{(s+1)})=J_{w'}^{(s)}$. We distinguish between two possible cases, depending on whether or not $s \in w'$. 

\noindent{\bf Case (i)} Assume that  $s\not\in w'$.
 
As $s<k$, we know that $s\not\in w$. 
In this case, set 
$J_{w'}^{(s+1)}:= J_{w'}^{(s)}[T_s^{-1}]\cap R^{(s+1)}$. It is easy to see that 
 $J_{w'}^{(s+1)}\hp R^{(s+1)}$ (observe that $T_s \notin J^{(2)}_{w'}$ implies $T_s \notin J^{(s)}_{w'}$) and we can check (using \eqref{matchloc}) that 
$\varphi_s(J_{w'}^{(s+1)})=J_{w'}^{(s)}$. Now, 
\begin{align*}
A^{(s+1)}\cap J_{w}^{(s+1)}
&= A^{(s+1)}\cap \bigl( J_{w}^{(s)}[T_s^{-1}]\cap R^{(s+1)} \bigr) \subseteq A^{(s)}[T_{s}^{-1}]\cap J_{w}^{(s)}[T_s^{-1}]\cap R^{(s+1)} \\
&\subseteq (A^{(s)}\cap J_{w}^{(s)})[T_s^{-1}]\cap R^{(s+1)} \subseteq J_{w'}^{(s)}[T_s^{-1}]\cap R^{(s+1)} = J_{w'}^{(s+1)},
\end{align*} 
as required to finish Case (i). (Here, the first containment and the last equality follow from \eqref{matchloc}, and the last 
containment is given by the inductive hypothesis.)

\noindent{\bf Case (ii)} Assume that  $s\in w'$.
In this case, $s\in w$. 

We have $T_s \in J^{(2)}_{w'}$ and $T_s \in J^{(2)}_w$. It follows that $T_s \in J^{(s)}_{w'}$ as well as $T_s \in J^{(s)}_w$ and $T_s \in J^{(s+1)}_w$.

In order to set up an application of Lemma \ref{lemma-induced}, let
\begin{align*}
A^{(s+1)}_{k-1} &:= \mk \langle x_1^{(s+1)}, \dots, x_{s}^{(s+1)}, T_{s+1}, \dots, T_{k-1} \rangle \subset A^{(s+1)} \\
 R^{(s+1)}_k &:= \mk \langle x_1^{(s+1)}, \dots, x_s^{(s+1)}, T_{s+1}, \dots, T_k \rangle \subset R^{(s+1)},
\end{align*}
so that $R^{(s+1)}_k = A^{(s+1)}_{k-1}[T_k; \tau_k]$. In view of Lemma \ref{chi.xji}, the restriction of $(h_k\cdot)$ to $R^{(s+1)}_k$ (where $h_k$ is the element of $\ch$ occurring in part (iii) of Definition \ref{CGLdef}) yields an automorphism $\tauhat_k$ which extends $\tau_k$ and satisfies $\tauhat_k(T_k) = q_k T_k$.

\noindent{\bf Claim} Suppose that $a\in A^{(s+1)}_{k-1}$ and 
$aT_k^n \in \langle T_s\rangle_{R^{(s+1)}_k}$ for some $n \ge 0$. Then 
$a\in  \langle T_s\rangle_{A^{(s+1)}_{k-1}}$.

\noindent{\em Proof of Claim} Recall from \eqref{Rbar.qtorus} that 
$T_kT_s=\lambda T_sT_k$ where $\lambda = \lambda_{k,s} \in \mk^*$. 

Suppose that $aT_k^n =\sum_i\, c_iT_sd_i$, with $c_i,d_i\in R^{(s+1)}_k$. Write $c_i=\sum_{\alpha}\, c_{i\alpha}T_k^\alpha$ and 
$d_i =\sum_\beta\,d_{i\beta}T_k^\beta$ with $c_{i\alpha}, d_{i\beta}\in A^{(s+1)}_{k-1}$. Thus, 
\begin{align*} 
aT_k^n &= \sum_{i,\alpha,\beta}\, (c_{i\alpha}T_k^\alpha )T_s(d_{i\beta}T_k^\beta) = \sum_{i,\alpha,\beta}\, c_{i\alpha}\lambda^\alpha T_sT_k^\alpha d_{i\beta}T_k^\beta = \sum_{i,\alpha,\beta}\, c_{i\alpha}\lambda^\alpha T_s\tau_k^\alpha(d_{i\beta})T_k^\alpha T_k^\beta  \\
&= \sum_{i,\alpha,\beta}\, c_{i\alpha}\lambda^\alpha T_s\tau_k^\alpha(d_{i\beta})T_k^{\alpha+\beta} = \sum_{m=0}^\infty\, \biggl(\, \sum_i\, \sum_{\alpha+\beta = m}\, c_{i\alpha}\lambda^\alpha T_s\tau_k^\alpha(d_{i\beta}) \biggr) T_k^m.
\end{align*} 
Since $R^{(s+1)}_k =A^{(s+1)}_{k-1}[T_k;\tau_k]$ is an Ore extension, it follows that 
$$
a= \sum_i\, \sum_{\alpha=0}^n c_{i\alpha}\lambda^\alpha T_s\tau_k^\alpha(d_{i, n-\alpha}) \in \langle T_s\rangle_{A^{(s+1)}_{k-1}} \,,
$$
as required to establish the truth of the claim. 

The case $n=0$ of the claim implies that $\langle T_s\rangle_{R^{(s+1)}_k} \cap A^{(s+1)}_{k-1} = \langle T_s\rangle_{A^{(s+1)}_{k-1}}$.
Applying Lemma \ref{lemma-induced}, we obtain
$$
\langle T_s\rangle_{R^{(s+1)}_k} = \langle T_s\rangle_{A^{(s+1)}_{k-1}} R^{(s+1)}_k \,.
$$
Since the ideals $\langle T_s\rangle_{R^{(s+1)}_k}$ and $\langle T_s\rangle_{A^{(s+1)}_{k-1}}$ are invariant under $\tau_{k+1}, \dots, \tau_N$, it follows that
$$
\langle T_s\rangle_{R^{(s+1)}} = \langle T_s\rangle_{R^{(s+1)}_k} R^{(s+1)} \qquad \text{and} \qquad
\langle T_s\rangle_{A^{(s+1)}} = \langle T_s\rangle_{A^{(s+1)}_{k-1}} A^{(s+1)},
$$
whence $\langle T_s\rangle_{R^{(s+1)}} = \langle T_s\rangle_{A^{(s+1)}} R^{(s+1)}$.
Consequently,
\begin{equation}  \label{R/T.isom}
\text{the natural map} \;\; \left(
A^{(s+1)}/\langle T_s\rangle_{A^{(s+1)}}\right)
[\widebar{T}_k;\widebar{\sigma}_k] \longrightarrow R^{(s+1)}/\langle T_s\rangle \;\; \text{is an isomorphism}.
\end{equation}

Consider the map $g_s:R^{(s)}\longrightarrow R^{(s+1)}/\langle T_s\rangle$
that arises in the deleting derivations process \eqref{gjdef}. This map  induces an 
isomomorphism from 
$R^{(s)}/ \ker(g_s)$ to $R^{(s+1)}/ \langle T_s\rangle$, 
and we know that 
$\ker(g_s)\subseteq J_{w}^{(s)} = g_s^{-1}(J_{w}^{(s+1)}/\langle T_s\rangle)$. In fact, there is the isomorphism 
\[
R^{(s)}/J_{w}^{(s)}\cong R^{(s+1)}/J_{w}^{(s+1)}
\]
that is induced by $g_s$. 

We shall prove that $\ker(g_s)\subseteq J_{w'}^{(s)}$. Let $x\in\ker(g_s)$, and write $x=\sum_i a_i^{(s)}T_k^i$ with each 
$a_i^{(s)}\in A^{(s)}$. Now by \eqref{R/T.isom}, $g_s(x)=0$ implies $g_s(a_i^{(s)}) = 0$, for each $i$; so $a_i^{(s)}\in\ker(g_s)\cap A^{(s)}
\subseteq J_w^{(s)}\cap A^{(s)} \subseteq J_{w'}^{(s)}$, where the final 
containment is given by the inductive hypothesis. Hence, 
$\ker(g_s)\subseteq J_{w'}^{(s)}$, as required. 

Since $g_s$ is $\ch$-equivariant (Corollary \ref{gjHequi}), it follows that there is an $\ch$-prime ideal $J_{w'}^{(s+1)}\hp R^{(s+1)}$ such 
that $J^{(s+1)}_{w'} \supseteq \langle T_s\rangle$ and $g_s$ induces an isomorphism 
\[
R^{(s)}/J_{w'}^{(s)}\cong R^{(s+1)}/J_{w'}^{(s+1)}.\\[2ex]
\]
In particular, $J^{(s)}_{w'} = g_s^{-1}(J^{(s+1)}_{w'}/\langle T_s\rangle) = \varphi_s(J^{(s+1)}_{w'})$.

Let $z\in A^{(s+1)}\cap J_w^{(s+1)}$. There exists $z'\in A^{(s)}\cap J_w^{(s)}$ 
such that 
$g_s(z')=\widebar{z}\in R^{(s+1)}/\langle T_s\rangle$. 
Now, $z'\in J_{w'}^{(s)}$, by the inductive hypothesis; 
so $\widebar{z}=g_s(z')\in g_s(J_{w'}^{(s)}) =  J_{w'}^{(s+1)}/\langle T_s\rangle$. 
Since $T_s\in J_{w'}^{(s+1)}$, this establishes the required 
inclusion $A^{(s+1)}\cap J_{w}^{(s+1)}\subseteq J_{w'}^{(s+1)}$ in this case.
 This finishes Case (ii).
 
 At this stage, we have constructed $\ch$-prime ideals 
$J_{w'}^{(i)}\hp R^{(i)}$ for $i \in \gc 2,k \dc$ with 
$\varphi_i(J_{w'}^{(i+1)})=J_{w'}^{(i)}$ for $i= \gc 2,k-1 \dc$. We still need to construct $\ch$-prime ideals $J_{w'}^{(i)}\hp R^{(i)}$ for $i \in \gc k+1,N+1 \dc$ with 
$\varphi_i(J_{w'}^{(i+1)})=J_{w'}^{(i)}$ for $i \in \gc k,N \dc$. However, for $s\geq k$, we 
know that $s\not\in w'$; so the same reasoning as in the first part of Case (i) 
above does what is required.

Now $J_{w'}^{(N+1)}$ is an $\ch$-prime ideal of $R$ with $\varphi(J_{w'}^{(N+1)}) = J^{(2)}_{w'} = K_{w'}$, whence $w'$ is the Cauchon diagram for $J_{w'}^{(N+1)}$ and $J_{w'}^{(N+1)} = J_{w'}$. The fact that $J_{w'}\subsetneqq J_w$ follows from \cite[Theorem 1.4]{bl}
\end{proof}

\begin{corollary}  \label{htHprime}
Let $J$ be an $\ch$-prime ideal in the quantum nilpotent algebra $R$. If $w$ is the Cauchon diagram of $J$, then $\height(J)\geq \#{\rm black}(w)$. 
\end{corollary}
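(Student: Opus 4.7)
The plan is to iterate Proposition \ref{largerCauchon} (``black box removal'') starting from the Cauchon diagram $w$ of $J = J_w$. Specifically, I would define $w_0 := w$ and, as long as $w_i$ is nonempty, set $k_i := \max w_i$ and $w_{i+1} := w_i \setminus \{k_i\}$. By Proposition \ref{largerCauchon}, at each step $w_{i+1}$ is again a Cauchon diagram for $R$ and the associated $\ch$-prime ideals satisfy $J_{w_{i+1}} \subsetneqq J_{w_i}$. After exactly $|w| = \#\mathrm{black}(w)$ iterations we reach $w_{|w|} = \varnothing$, corresponding to $J_\varnothing = 0$.

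This produces a strictly descending chain
\[
J = J_{w_0} \supsetneqq J_{w_1} \supsetneqq \cdots \supsetneqq J_{w_{|w|}} = 0
\]
of $\ch$-prime ideals of $R$ of length $|w|$. Since every $\ch$-prime ideal of $R$ is completely prime (by the Goodearl--Letzter result \cite[Proposition 4.2]{gl2} recalled in Section~\ref{section-cgl}), each $J_{w_i}$ is in particular a prime ideal. Thus the chain above is a chain of prime ideals in $R$ descending from $J$ of length $|w|$, which gives $\height(J) \geq |w| = \#\mathrm{black}(w)$.

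No real obstacle arises at this stage: all of the work has already been done in Proposition \ref{largerCauchon}, whose whole point was to allow precisely this kind of iteration. The only thing to check is that the procedure terminates after $|w|$ strict drops, which is immediate since $|w_{i+1}| = |w_i| - 1$ at each stage.
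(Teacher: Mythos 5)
Your proof is correct and follows exactly the paper's intended argument: iterate Proposition \ref{largerCauchon} to strip off the black boxes one at a time, producing a strictly descending chain of $\ch$-prime (hence prime, since $\ch$-primes are completely prime) ideals of length $\#\mathrm{black}(w)$ below $J$. This is precisely the iteration the paper announces before the ``Black box removal'' subsection and refers back to in the remark following the corollary.
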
 

\subsection{Height formula for $\ch$-primes}

\begin{theorem} \label{Tauvel-Hprimes}
Let $J$ be an $\ch$-prime ideal of the quantum nilpotent algebra $R$ with Cauchon diagram $w$. Then 
\[
\gk(R/J) + \height(J)=\gk(R); 
\]
that is, Tauvel's height formula holds for the $\ch$-prime ideals of any 
quantum nilpotent algebra. Furthermore, $\gk(R/J)=\#{\rm white}(w)$ and 
$\height(J)=\#{\rm black}(w)$. 
\end{theorem}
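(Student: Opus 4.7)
The plan is to assemble this theorem almost immediately from the pieces already in hand, so the proof is essentially a bookkeeping step rather than a new computation. All the hard work has been done in the preceding results: Corollary \ref{GKRmodJw} identifies $\gk(R/J_w)$ with $\#\mathrm{white}(w)$, Corollary \ref{htHprime} bounds $\height(J_w)$ from below by $\#\mathrm{black}(w)$, and Theorem \ref{theorem-gkdrop} gives the general upper bound $\gk(R/P) + \height(P) \le \gk(R)$ for any prime $P$ in a noetherian algebra.

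First I would write $J = J_w$ and note that by Corollary \ref{GKRmodJw} we already have the first of the claimed equalities,
\[
\gk(R/J_w) \;=\; N - |w| \;=\; \#\mathrm{white}(w).
\]
Next, by Corollary \ref{htHprime}, $\height(J_w) \ge \#\mathrm{black}(w) = |w|$. Adding these two facts and using $\#\mathrm{white}(w) + \#\mathrm{black}(w) = N = \gk(R)$ from Theorem \ref{theorem-gkdim}, I obtain
\[
\gk(R/J_w) + \height(J_w) \;\ge\; \#\mathrm{white}(w) + \#\mathrm{black}(w) \;=\; N \;=\; \gk(R).
\]

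Finally I would invoke Theorem \ref{theorem-gkdrop} (which applies since $R$ is noetherian and $J_w$ is prime, by the Goodearl--Letzter result quoted in Section \ref{section-cgl}) to obtain the reverse inequality $\gk(R/J_w) + \height(J_w) \le \gk(R)$. Sandwiching forces equality throughout, which is Tauvel's height formula for $J_w$, and also forces $\height(J_w) = \#\mathrm{black}(w)$ since the lower bound from Corollary \ref{htHprime} must then be tight. There is no real obstacle here: the substantive content was contained in Proposition \ref{largerCauchon} (building the descending chain of $\ch$-primes that yields Corollary \ref{htHprime}) and in the $\Tdeg$-stability argument of Corollary \ref{GKRmodJw}; the present theorem is the point at which those two complementary bounds are combined.
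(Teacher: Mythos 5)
Your proposal is correct and takes essentially the same approach as the paper: combine the exact value $\gk(R/J_w)=\#\mathrm{white}(w)$ from Corollary \ref{GKRmodJw}, the lower bound $\height(J_w)\geq\#\mathrm{black}(w)$ from Corollary \ref{htHprime}, and the general upper bound of Theorem \ref{theorem-gkdrop}, then sandwich using $\gk(R)=N$ from Theorem \ref{theorem-gkdim}. The paper's argument is the same chain of inequalities, just written in a single display.
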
 

\begin{proof}  We already have $\gk(R/J)=\#{\rm white}(w)$, by Corollary \ref{GKRmodJw}. By using Corollary \ref{htHprime} and Theorem \ref{theorem-gkdrop}, we see that 
\[
N=\gk(R)\geq \gk(R/J)+\height(J)\geq \#{\rm white}(w)+\#{\rm black}(w)= N.
\]
Tauvel's height formula for $J$ follows, as does the claim about $\height(J)$.
\end{proof}

In order to extend this result to arbitrary prime ideals, we need to further employ Tdeg-stability. The necessary details are given in the next section. 

The proof of Corollary \ref{htHprime} shows that for any $\ch$-prime ideal $J$ of $R$ with Cauchon diagram $w$, there is a strictly descending chain $J_0 = J \supsetneq J_1 \supsetneq \cdots \supsetneq J_m = 0$ of $\ch$-primes of $R$ with $m = \#{\rm black}(w)$. In other words, the height of $J$ within the poset $\hspec R$ is at least $\#{\rm black}(w)$. Since this value, which we denote $\height_{\hspec R}(J)$, is dominated by $\height_{\spec R}(J) := \height(J)$, we obtain the following from Theorem \ref{Tauvel-Hprimes}.

\begin{corollary}  \label{Hht=ht}
If $J$ is any $\ch$-prime ideal of the quantum nilpotent algebra $R$, then 
\begin{equation}  \label{HhtJ=htJ}
\height_{\hspec R}(J) = \height_{\spec R}(J).
\end{equation}
\end{corollary}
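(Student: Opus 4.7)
The plan is to squeeze $\height_{\hspec R}(J)$ between two equal quantities, both of which appear explicitly in what has already been proved. Write $w$ for the Cauchon diagram of $J$, and set $m := \#{\rm black}(w)$.

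First, I would recall the lower bound that is already essentially in hand. Iterating Proposition \ref{largerCauchon}, starting from $J = J_w$ and repeatedly removing the maximal black box, produces a strictly descending chain
\begin{equation*}
J = J_{w_0} \supsetneq J_{w_1} \supsetneq \cdots \supsetneq J_{w_m} = J_\varnothing = 0
\end{equation*}
of $\ch$-prime ideals of $R$, where $w_i$ is obtained from $w_{i-1}$ by deleting the largest element. (This is the chain constructed in the proof of Corollary \ref{htHprime}.) Each $J_{w_i}$ lies in $\hspec R$, so this gives $\height_{\hspec R}(J) \geq m$.

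Next, the trivial inclusion $\hspec R \subseteq \spec R$ gives the inequality $\height_{\hspec R}(J) \leq \height_{\spec R}(J) = \height(J)$, simply because every saturated chain in $\hspec R$ ending at $J$ is in particular a chain in $\spec R$ ending at $J$. Combined with Theorem \ref{Tauvel-Hprimes}, which identifies $\height(J) = \#{\rm black}(w) = m$, I obtain the sandwich
\begin{equation*}
m \;\leq\; \height_{\hspec R}(J) \;\leq\; \height_{\spec R}(J) \;=\; m,
\end{equation*}
forcing equality throughout and in particular establishing \eqref{HhtJ=htJ}.

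There is no real obstacle here: the corollary is essentially a bookkeeping consequence of the two inequalities that were the substance of Theorem \ref{Tauvel-Hprimes}. The only thing to be careful about is that the ``lower bound'' argument really does live in $\hspec R$ --- that is, each $J_{w_i}$ produced by iterating Proposition \ref{largerCauchon} is genuinely an $\ch$-prime of $R$, not merely a prime. But this is exactly what Proposition \ref{largerCauchon} asserts, so nothing more need be done.
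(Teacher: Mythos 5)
Your proof is correct and follows essentially the same reasoning as the paper: iterate Proposition \ref{largerCauchon} to exhibit a chain of $\ch$-primes of length $\#{\rm black}(w)$ below $J$, note that any chain in $\hspec R$ is a chain in $\spec R$, and invoke Theorem \ref{Tauvel-Hprimes} to close the sandwich.
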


Equation \eqref{HhtJ=htJ} had previously been established only under the hypothesis that $\hspec R$ has \emph{$\ch$-normal separation}, meaning that for any $\ch$-prime ideals $J \subsetneq K$ of $R$, there is an $\ch$-eigenvector $u \in K \setminus J$ such that $u+J$ is normal in $R/J$ and the corresponding automorphism of $R/J$ is given by some element of $\ch$ (see \cite[Proposition 5.9]{y1}). As a consequence, \eqref{HhtJ=htJ} was known for quantum nilpotent algebras of the form $U^w_-(\gfrak)$ (i.e., \emph{quantum Schubert cell algebras}) \cite[Proof of Theorem 5.8]{y1} and for cocycle twists of the $U^w_-(\gfrak)$ \cite[Section 5]{y2}.


\section{T-degree stability for primitive quotients of quantum nil\-potent algebras}

We now require some more precise information about $\Tdeg$ and $\Tdeg$-stability. 
This will be obtained using the \emph{lower transcendence degree} (over $\mk$) of a $\mk$-algebra $A$, as defined in \cite{z2}. This degree, denoted $\Ld(A)$, is a value in $\mr_{\ge0} \cup \{\infty\}$; we refer to \cite{z2} for the definition. (We do not require lower transcendence degrees over division subalgebras of $A$.)

\begin{lemma}  \label{finiteextLd}
Let $B \subseteq A$ be prime Goldie $\mk$-algebras such that all regular elements of $B$ are also regular in $A$. If $A$ is finitely generated as a right $B$-module, then $\Ld(A) = \Ld(B)$.
\end{lemma}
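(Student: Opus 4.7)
The plan is to pass to the Goldie quotient rings and appeal to properties of $\Ld$ established by Zhang in \cite{z2}. Let $C_B$ denote the set of regular elements of $B$; since $B$ is prime Goldie, $\Fract B = BC_B^{-1}$ is simple Artinian. The hypothesis $C_B \subseteq C_A$ ensures that each element of $C_B$ is already invertible in $\Fract A$, so $\Fract B$ embeds naturally as a subalgebra of $\Fract A$. Because $A$ is finitely generated as a right $B$-module, the subalgebra $A\cdot \Fract B \subseteq \Fract A$ is finitely generated as a right $\Fract B$-module, and hence has finite length over $\Fract B$. Since every right ideal of $A\cdot \Fract B$ is a right $\Fract B$-submodule, $A\cdot \Fract B$ is right Artinian.

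The key intermediate step is the claim $A\cdot \Fract B = \Fract A$. For any regular $c \in A$, the descending chain of right ideals $c(A\cdot \Fract B) \supseteq c^{2}(A\cdot \Fract B) \supseteq \cdots$ stabilizes, so $c^{n} = c^{n+1} y$ for some $n \ge 1$ and some $y \in A\cdot \Fract B$. Reading this identity inside $\Fract A$, where $c$ is invertible, forces $y = c^{-1}$. Hence $c^{-1} \in A\cdot \Fract B$ for every regular $c \in A$, giving $\Fract A \subseteq A\cdot \Fract B$, and therefore equality. In particular, $\Fract A$ is finitely generated as a right $\Fract B$-module, and both are simple Artinian $\mk$-algebras.

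To finish, I would invoke two ingredients from \cite{z2}: (1) for any prime Goldie $\mk$-algebra $R$, one has $\Ld(R) = \Ld(\Fract R)$; and (2) $\Ld$ is preserved under finite right-module extensions between simple Artinian $\mk$-algebras, reducing if necessary to the corresponding property for division subalgebras via the Wedderburn--Artin decompositions of $\Fract A$ and $\Fract B$ (and the invariance of $\Ld$ under passage to matrix rings). Chaining these equalities produces
\[
\Ld(A) \;=\; \Ld(\Fract A) \;=\; \Ld(\Fract B) \;=\; \Ld(B).
\]
I expect the main obstacle to be locating and applying these precise properties of $\Ld$ in \cite{z2}; the ring-theoretic reduction carried out above is fairly standard, and the whole argument mirrors the Tdeg-stability transfer already exploited via Proposition \ref{transfer.Tdeg} in this paper.
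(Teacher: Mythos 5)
Your proof is correct and follows essentially the same route as the paper: reduce via $\Ld(R)=\Ld(\Fract R)$ to showing $\Fract A$ is a finitely generated right $\Fract B$-module, establish $A\cdot\Fract B=\Fract A$ by an Artinian-module argument (your stabilizing chain $c^n(A\cdot\Fract B)$ and the paper's ``$aD\cong D$ with $aD\subseteq D$ artinian'' are the same idea), and then invoke the finite-extension result for $\Ld$ over an artinian base --- precisely \cite[Theorem 0.3(1) and (2)]{z2}, so no detour through Wedderburn--Artin decompositions is needed.
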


\begin{proof} By \cite[Theorem 0.3(2)]{z2}, $\Ld(A) = \Ld(\Fract(A))$ and $\Ld(B) = \Ld(\Fract(B))$, so it remains to show that $\Ld(\Fract(A)) = \Ld(\Fract(B))$. This will follow from \cite[Theorem 0.3(1)]{z2} once we show that $\Fract(A)$ is finitely generated as a right $\Fract(B)$-module, since $\Fract(B)$ is artinian.

Due to the assumption on regular elements, we can identify $\Fract(B)$ with a subalgebra of $\Fract(A)$. We have $A = \sum_{i=1}^n a_iB$ for some $a_i \in A$. Set $D := A\cdot\Fract(B) = \sum_{i=1}^n a_i \Fract(B)$, a right $\Fract(B)$-submodule of $\Fract(A)$ which is finitely generated and thus artinian. Any regular element $a\in A$ is invertible in $\Fract(A)$, whence $aD \cong D$. Since $aD \subseteq D$ and $D$ is artinian on the right, $aD = D$, whence $a^{-1}D = D$. Now $D$ is a left ideal of $\Fract(A)$, and it contains $A$, so $D = \Fract(A)$. Therefore $\Fract(A)$ is a finitely generated right $\Fract(B)$-module, as desired.
\end{proof}

\begin{proposition}  \label{qtorusmodmax}
If $T$ is a quantum torus over $\mk$ and $M$ is a maximal ideal of $T$, then $T/M$ is $\Tdeg$-stable.
\end{proposition}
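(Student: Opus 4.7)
The plan is to exploit the McConnell--Pettit structure theorem for quantum tori to reduce $T/M$ to a matrix algebra over a quantum torus, and then transfer Tdeg-stability using Lemma \ref{qtorTdegstable} together with the finite-extension results of this section (Proposition \ref{modfinext} and Lemma \ref{finiteextLd}).

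First I would examine the center $Z := Z(T)$. By McConnell--Pettit, $Z$ is a commutative Laurent polynomial $\mk$-algebra and $T$ is a free $Z$-module (with basis given by coset representatives of the subgroup $H \le \Z^n$ that yields the center). Setting $\mfrak := M \cap Z$, the faithful flatness of $T$ over $Z$ combined with the maximality of $M$ in $T$ forces $\mfrak$ to be a maximal ideal of $Z$; by Hilbert's Nullstellensatz, the residue field $F := Z/\mfrak$ is a finite extension of $\mk$. Since $\mfrak T \subseteq M$, the quotient $\overline{T} := T/\mfrak T$ is naturally a twisted group algebra over $F$ on the quotient group $\Z^n/H$, with center precisely $F$. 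Invoking McConnell--Pettit again over $F$, and possibly passing to a finite algebraic extension $E$ of $F$ to handle torsion in $\Z^n/H$, one finds that $\overline{T} \otimes_F E$ splits as a product of matrix algebras over quantum tori over $E$. Consequently the simple quotient $T/M$ is isomorphic to $M_r(D)$ for some quantum torus $D$ over a finite extension of $\mk$.

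Finally, Lemma \ref{qtorTdegstable} applied to $D$ over its base field yields that $D$ is Tdeg-stable, and moreover $\gk(D) = \Ld(D)$ (a known property of quantum tori). Since $M_r(D)$ is free, and so finitely generated, as a right $D$-module, Proposition \ref{modfinext} gives $\gk(T/M) = \gk(D)$ and Lemma \ref{finiteextLd} gives $\Ld(T/M) = \Ld(D)$; combined with the general inequalities $\Ld \le \Tdeg \le \gk$, this forces $\gk(T/M) = \Tdeg(T/M)$. The Ore localization clause of Tdeg-stability is handled via the simplicity of $T/M$: any Ore localization at regular elements sits between $T/M$ and its simple Artinian Goldie quotient, both of which share the same $\gk$ and $\Ld$ by the same finite-extension argument, so $\Tdeg$ is preserved under localization.

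The main technical obstacle is the structural analysis of $\overline{T}$ when $\Z^n/H$ has torsion: one must address the possibility that $F$ does not contain enough roots of unity to split the torsion part, and the identification of $T/M$ as a matrix ring $M_r(D)$ relies on Wedderburn-type decompositions over the auxiliary extension $E$. The remainder of the argument is formal once this structural picture is in place.
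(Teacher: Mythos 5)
Your proposal starts correctly (restrict $M$ to the center, observe $\mfrak := M \cap Z(T)$ is a maximal ideal of $Z(T)$ with $Z(T)/\mfrak$ finite over $\mk$, then try to reduce to a smaller quantum torus via finite extensions). But the central step has a genuine gap, and the route through base change creates further complications.

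The gap is the claim that ``the simple quotient $T/M$ is isomorphic to $M_r(D)$ for some quantum torus $D$ over a finite extension of $\mk$.'' You obtain this by tensoring $\overline{T}=T/\mfrak T$ with a finite extension $E$ of $F=Z(T)/\mfrak$ and applying a Wedderburn decomposition to $\overline{T}\otimes_F E$, but a decomposition of $\overline{T}\otimes_F E$ does \emph{not} descend to an isomorphism of $T/M$ itself. In general $T/M$ is a crossed product (twisted group algebra) over $F$ on the finite group $\Z^n/H$, and there is no reason for this to be a matrix algebra over a quantum torus without extending scalars; you have given no argument for the descent. You also do not use the standard fact that $M=\mfrak T$ (ideals of a quantum torus are extended from the center), which would collapse your two-step quotient $T \twoheadrightarrow \overline{T} \twoheadrightarrow T/M$ to a single one. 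A further issue: $\gk$, $\Tdeg$ and $\Ld$ here are taken over the fixed base field $\mk$, so ``quantum torus over $E$'' is not a quantum torus over $\mk$ in the sense of Lemma~\ref{qtorTdegstable}, and that lemma does not directly apply to your $D$.

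The paper avoids base change entirely. It writes $\Z^n = Z \oplus W \oplus(\text{finite})$ where $Z$ corresponds to the central monomials, and uses $W$ to construct a $\mk$-subalgebra $C\subseteq T$ which is a quantum torus over $\mk$ with $Z(C)=\mk$, hence central simple over $\mk$ and $\Ld$-stable by \cite[Corollary 6.3(1)]{z2}. Then $B := Z(T)\otimes_\mk C$ is a subalgebra of $T$, and $B/\mfrak B = (Z(T)/\mfrak)\otimes_\mk C$ is a \emph{finite} ring extension of $C$, while $T/M = T/\mfrak T$ is a finite ring extension of $B/\mfrak B$. Two applications of Lemma~\ref{finiteextLd} and Proposition~\ref{modfinext} then give $\Ld(T/M)=\gk(T/M)$, and Tdeg-stability follows from the general criterion that $\Ld$-stability implies $\Tdeg$-stability (the reduction cited at the start of the paper's proof). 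Your idea of transferring degrees along finite extensions is right; what is missing is the construction of a central simple quantum subtorus over $\mk$ inside $T$ rather than an unjustified passage to an extension field. Finally, your separate argument for the Ore localization clause is both unnecessary (it follows once $\Ld$-stability is known) and not correct as stated: Ore localizations of $T/M$ at arbitrary regular Ore sets need not ``sit between $T/M$ and its Goldie quotient'' in a way that transfers $\Ld$ and $\gk$ without further argument.
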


\begin{proof} As noted in \cite[pp. 159--60]{z2}, it suffices to show that $T/M$ is \emph{$\Ld$-stable} in the sense that $\Ld(T/M) = \gk(T/M)$.

Extension and contraction give inverse bijections between the sets of ideals in $Z(T)$ and $T$ (e.g., \cite[Proposition II.3.8]{bg}), so $\mfrak := M \cap Z(T)$ is a maximal ideal of $Z(T)$ and $M = \mfrak T$. Since $Z(T)$ is a Laurent polynomial algebra over $\mk$ (e.g., \cite[Lemma II.3.7(e)]{bg}), the field $Z(T)/\mfrak$ is finite dimensional over $\mk$.

Write $T = \Obfq((\mk^*)^n)$ for some $n \in \mn$ and some multiplicatively skewsymmetric matrix $\bfq = (q_{ij}) \in M_n(\mk^*)$. Let $y_1^{\pm1}, \dots, y_n^{\pm1}$ be a standard set of generators for $T$, so that
$$ T = \mk \langle\, y_1^{\pm1}, \dots, y_n^{\pm1} \mid y_i y_j = q_{ij} y_j y_i \ \forall\; i,j \in \gc 1,n \dc\, \rangle.$$
Then let $\{ y^a \mid a \in \mz^n \}$ be the corresponding $\mk$-basis for $T$, where
$$
y^a := y_1^{a_1} y_2^{a_2} \cdots y_n^{a_n} \qquad \forall\; a = (a_1,\dots,a_n) \in \mz^n.
$$
Set $Z := \{ a \in \mz^n \mid y^a \in Z(T) \}$, so that $Z(T) = \bigoplus_{a\in Z} \mk y^a$ (e.g., \cite[Lemma II.3.7(a)]{bg}).

Choose a subgroup $W \subseteq \mz^n$ maximal with respect to the property $W \cap Z = 0$. Then $\mz^n/(Z \oplus W)$ is finite, and $Z \oplus W$ is free abelian of rank $n$. Choose bases $(b_1,\dots,b_l)$ and $(b_{l+1},\dots,b_n)$ for $Z$ and $W$, respectively, and set $z_i := y^{b_i}$ for all $i$. Then
$$
Z(T) = \bigoplus_{a\in Z} \mk y^a = \mk[z_1^{\pm1}, \dots, z_l^{\pm1}].
$$
Next, set
$$
C := \bigoplus_{a\in W} \mk y^a = \mk\langle z_{l+1}^{\pm1}, \dots, z_n^{\pm1} \rangle,$$
a quantum torus over $\mk$ of rank $r := n-l$. Finally, set 
$$
B := \bigoplus_{a\in Z\oplus W} \mk y^a = \mk\langle z_1^{\pm1}, \dots, z_n^{\pm1} \rangle,$$
a $\mk$-subalgebra of $T$. Observe that the multiplication map $Z(T) \otimes_\mk C \rightarrow T$ gives a $\mk$-algebra isomorphism of $Z(T) \otimes_\mk C$ onto $B$. We identify $B$ with $Z(T) \otimes_\mk C$ via this isomorphism.

We next show that $C$ is a central simple $\mk$-algebra (meaning only that $C$ is a simple ring with center $\mk$). Simplicity will follow from \cite[Proposition 1.3]{McPe} once we show that $Z(C) = \mk$. We know that $Z(C)$ is spanned by those $y^a$ with $a\in W$ and $y^a \in Z(C)$. Let $s := |\mz^n/(Z \oplus W)|$, so that $sb \in Z \oplus W$ for all $b\in \mz^n$. If $a \in W$ and $y^a \in Z(C)$, then $y^a$ commutes with $y^{sb}$ for any $b \in \mz^n$. There is a scalar $\lambda_{a,b} \in \mk^*$ such that $y^a y^b = \lambda_{a,b} y^b y^a$, whence $y^a y^{sb} = \lambda_{a,b}^s y^{sb} y^a = \lambda_{a,b}^s y^a y^{sb}$. But then $\lambda_{a,b}^s = 1$ and so $y^{sa} y^b = y^b y^{sa}$. Consequently, $y^{sa} \in Z(T)$ and $sa \in Z$. Since also $a\in W$, we must have $a=0$. Therefore $Z(C) = \mk$, as required.

Since $C$ is a central simple $\mk$-algebra, $\mfrak B = \mfrak \otimes_\mk C$ is a maximal ideal of $B$. Now $B/\mfrak B = (Z(T)/\mfrak) \otimes_\mk C$, and we identify $C$ with a subalgebra of $B/\mfrak B$. Note that any $\mk$-basis for $Z(T)/\mfrak$  provides a finite basis for $B/\mfrak B$ as a free right and left $C$-module. In particular, it follows that all regular elements of $C$ are also regular in $B/\mfrak B$. Thus, Lemma \ref{finiteextLd} implies $\Ld(B/\mfrak B) = \Ld(C)$. Now $\gk(C) = r$, and $C$ is $\Ld$-stable by \cite[Corollary 6.3(1)]{z2}, so $\Ld(C) = r$. Taking account of Proposition \ref{modfinext}, we therefore have
\begin{equation}  \label{LdB/mB}
\Ld(B/\mfrak B) = \gk(B/\mfrak B) = r.
\end{equation}

Since $\mfrak B$ is a maximal ideal of $B$, it must equal $B\cap M$. We then identify $B/ \mfrak B$ with its image in $T/M$. Observe that $T$ is a free right and left $B$-module with a basis $\{ y^{u_1}, \dots, y^{u_s} \}$, where $\{ u_1,\dots,u_s \}$ is a complete set of coset representatives for $\mz^n/(Z \oplus W)$.     Consequently, $T/M = T/\mfrak T$ is a free right and left $(B/\mfrak B)$-module with a basis $\{ y^{u_1}+M, \dots, y^{u_s}+M \}$, so $T/M$ is finitely generated as a right $(B/\mfrak B)$-module and all regular elements of $B/\mfrak B$ are also regular in $T/M$. Therefore Lemma \ref{finiteextLd} and Proposition \ref{modfinext}, in combination with \eqref{LdB/mB}, yield
$$
\Ld(T/M) = \gk(T/M) = r.  \qedhere
$$
\end{proof}

In \cite[Theorem 1.6]{bln}, Tdeg-stability is proved for primitive quotients of uniparameter quantum nilpotent algebras. Here, we extend 
the result to all quantum nilpotent algebras.

\begin{theorem} \label{theorem-torsionfree} 
Let $R$ be a quantum nilpotent algebra 
 and let $P$ be a primitive ideal of R. Suppose that $J = J_w$ is the $\ch$-prime ideal 
 of $R$ such that $P\in\spec_J(R)$. Then 
 
{\rm(a)} There is an Ore set $\cf_w \subseteq R/J$ of regular $\ch$-eigenvectors such that $(R/J)\cf_w ^{-1}$ is a quantum torus over $\mk$ and $(P/J)\cf_w^{-1}$ is a maximal ideal of $(R/J)\cf_w^{-1}$.

 {\rm(b)} $R/J$ and $R/P$ are $\Tdeg$-stable.
 
{\rm(c)} $\gk\bigl( (R/J)\cf_w ^{-1} \bigr) = \gk(R/J)$ and $\gk\bigl( (R/P)\cf_w ^{-1} \bigr) = \gk(R/P)$.
\end{theorem}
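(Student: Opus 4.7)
The plan is to deduce each part directly from the machinery already assembled in the paper. For part (a), I would take $\cf_w$ to be exactly the Ore set constructed in Theorem \ref{matchloc2}. Part (a) of that theorem provides an $\ch$-equivariant isomorphism $(R/J)\cf_w^{-1} \cong (\Rbar/K_w)\ce_w^{-1}$, and the paragraph preceding Theorem \ref{matchloc2} already records that the right-hand side is a quantum torus of rank $N-|w|$. For maximality of $(P/J)\cf_w^{-1}$: since $P$ is primitive, it is maximal in $\spec_J(R)$ by \cite[Theorem II.8.4]{bg}; Theorem \ref{matchloc2}(b) identifies $\spec_J(R)$ with $\{Q \in \spec(R) \mid Q \supseteq J,\ (Q/J) \cap \cf_w = \varnothing\}$, which via the standard Ore-localization correspondence is in inclusion-preserving bijection with $\spec((R/J)\cf_w^{-1})$. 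Hence $(P/J)\cf_w^{-1}$ is a maximal prime of the noetherian ring $(R/J)\cf_w^{-1}$, and is therefore a maximal (two-sided) ideal.

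For part (b), the $\Tdeg$-stability of $R/J$ is already established in Corollary \ref{GKRmodJw}. For $R/P$, the strategy is to transport $\Tdeg$-stability across an Ore localization: apply Proposition \ref{qtorusmodmax} to the quantum torus $(R/J)\cf_w^{-1}$ and its maximal ideal $(P/J)\cf_w^{-1}$ (from part (a)) to get $\Tdeg$-stability of the quotient, then identify this quotient with $(R/P)\overline{\cf_w}^{-1}$, where $\overline{\cf_w}$ denotes the image of $\cf_w$ in $R/P$. The latter identification is a standard consequence of passing to quotients commuting with Ore localization, and $\overline{\cf_w}$ consists of regular elements because $R/P$ is prime noetherian and $\cf_w \cap (P/J) = \varnothing$. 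Since $\Fract(R/P) = \Fract((R/P)\overline{\cf_w}^{-1})$ and both algebras are prime noetherian (hence semiprime Goldie), Proposition \ref{transfer.Tdeg} transfers $\Tdeg$-stability from the localization down to $R/P$.

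Part (c) then drops out by unwinding $\Tdeg$-stability:
\begin{align*}
\gk(R/J) &= \Tdeg(R/J) = \Tdeg((R/J)\cf_w^{-1}) = \gk((R/J)\cf_w^{-1}),\\
\gk(R/P) &= \Tdeg(R/P) = \Tdeg((R/P)\overline{\cf_w}^{-1}) = \gk((R/P)\overline{\cf_w}^{-1}),
\end{align*}
where the outer equalities use $\Tdeg$-stability of each algebra (invoking Lemma \ref{qtorTdegstable} for the quantum torus $(R/J)\cf_w^{-1}$ and Proposition \ref{qtorusmodmax} for the simple localization $(R/P)\overline{\cf_w}^{-1}$), and the middle equalities use the Ore-invariance clause in the definition of $\Tdeg$-stability. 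I expect the only point needing care to sit inside part (a), namely confirming that the identification in Theorem \ref{matchloc2}(b) really is a poset isomorphism so that maximality of $P$ within $\spec_J(R)$ transports to maximality of $(P/J)\cf_w^{-1}$ as an ideal. Once this bookkeeping is in place, (b) and (c) amount to mechanical assembly of the $\Tdeg$-stability toolkit.
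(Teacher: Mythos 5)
Your proposal matches the paper's own proof: part (a) is obtained from Theorem \ref{matchloc2} together with the maximality of primitives within their stratum, and parts (b) and (c) are deduced from Corollary \ref{GKRmodJw}, Proposition \ref{qtorusmodmax} applied to the quotient quantum torus, and the transfer result Proposition \ref{transfer.Tdeg}. The only cosmetic differences are that you cite \cite[Theorem II.8.4]{bg} where the paper cites \cite[Theorem 4.4]{gl2} for maximality of $P$ in $\spec_J(R)$, and you unwind the $\Tdeg$/$\gk$ chain explicitly for part (c) where the paper simply invokes Proposition \ref{transfer.Tdeg}; the content is the same.
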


\begin{proof} (a) By Theorem \ref{matchloc2}, there is an Ore set $\cf_w \subset R/J$ consisting of regular $\ch$-eigenvectors such that
\begin{gather*}
\spec_{J}(R) = \{ P \in \spec(R) \mid P\supseteq J \ \text{and} \ (P/J) \cap \cf_w = \varnothing \}  \\
(R/J)\cf_w^{-1} \cong (\ol{R}/K_w)\ce_w^{-1} \cong \co_{\Lambda_w}((\mk^*)^{N-|w|}),
\end{gather*}
where $\Lambda_w$ is a submatrix of $\Lambda$ (recall \eqref{Rbar.qtorus}). 
In view of \cite[Theorem 4.4]{gl2}, the primitive ideal $P$ is maximal in $\spec_J(R)$, and consequently $(P/J)\cf_w^{-1}$ is a maximal ideal of $(R/J)\cf_w^{-1}$.

(b)(c) We have already shown in Corollary \ref{GKRmodJw} that $R/J$ is $\Tdeg$-stable. Note that the image of $\cf_w$ in $R/P$ consists of regular elements (e.g., \cite[Lemma 10.19]{gw}), so that $(R/P)\cf_w^{-1}$ is naturally isomorphic to a subalgebra of $\Fract R/P$. In view of part (a) and Proposition \ref{qtorusmodmax}, the second part of (b) and part (c) follow from Proposition \ref{transfer.Tdeg}.
\end{proof}




\section{Tauvel's height formula}  

\begin{theorem}
Let $P$ be a prime ideal of the quantum nilpotent algebra $R$. Then 
\[
\gk(R/P) + \height(P)=\gk(R); 
\]
that is, Tauvel's height formula holds for all quantum nilpotent algebras.
\end{theorem}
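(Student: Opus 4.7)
The plan is to fix a prime $P \in \spec(R)$, set $J := (P:\ch) = J_w$ so that $P \in \spec_J(R)$, and write $d := \gk(R/J) = N - |w|$ from Corollary \ref{GKRmodJw}. The upper bound $\gk(R/P) + \height(P) \le N$ is Theorem \ref{theorem-gkdrop}, so the task reduces to producing the reverse inequality. The strategy is to exploit the Goodearl-Letzter homeomorphism of $\spec_J(R)$ with the spectrum of a commutative Laurent polynomial ring, concatenating a chain of $\ch$-primes from $0$ up to $J$ (furnished by iterated application of Proposition \ref{largerCauchon}) with a chain of primes from $J$ up to $P$ living inside the stratum.

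First I would set up the stratum. By Theorem \ref{matchloc2}, $T_w := (R/J)\cf_w^{-1}$ is a quantum torus of rank $d$; write $Z(T_w) \cong \mk[z_1^{\pm 1},\dots,z_l^{\pm 1}]$ and set $r := d - l$. The homeomorphism $\spec_J(R) \cong \spec Z(T_w)$ recalled in Section \ref{section-cgl} is inclusion-preserving, so if $\mathfrak{p}$ denotes the prime of $Z(T_w)$ corresponding to $P$ and $h := \height(\mathfrak{p})$, a chain $\mathfrak{p}_0 \subsetneq \cdots \subsetneq \mathfrak{p}_h = \mathfrak{p}$ in $Z(T_w)$ lifts to a chain $J = P_0 \subsetneq \cdots \subsetneq P_h = P$ inside $\spec_J(R) \subseteq \spec(R)$. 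Concatenating with the chain of $\ch$-primes $0 = J_0 \subsetneq \cdots \subsetneq J_{|w|} = J$ provided by Proposition \ref{largerCauchon} yields
\[
\height(P) \ge |w| + h.
\]

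Next I would compute $\gk(R/P)$. Following the quantum-torus decomposition used in the proof of Proposition \ref{qtorusmodmax}, pick a central simple quantum subtorus $C \subset T_w$ of rank $r$ so that $B := Z(T_w) \otimes_\mk C$ is a subalgebra of $T_w$ over which $T_w$ is free of finite rank. Since every prime of a quantum torus is extended from its center, $Q := (P/J)\cf_w^{-1}$ equals $\mathfrak{p} T_w$, whence $T_w/Q$ is free of finite rank over $B/\mathfrak{p} B = (Z(T_w)/\mathfrak{p}) \otimes_\mk C$. Proposition \ref{modfinext}, together with the standard identity $\gk(A \otimes_\mk C) = \gk(A) + \gk(C)$ for an affine commutative $\mk$-algebra $A$, gives
\[
\gk(T_w/Q) = \gk(Z(T_w)/\mathfrak{p}) + r = (l - h) + r = d - h.
\]
To transfer this to $R/P$, I would apply Proposition \ref{transfer.Tdeg} to the inclusion $R/P \subseteq T_w/Q = (R/P)\cf_w^{-1}$, which share the common ring of fractions $\Fract(R/P)$. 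Combined with the height bound,
\[
\gk(R/P) + \height(P) \ge (d-h) + (|w|+h) = d + |w| = N,
\]
forcing equality.

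The hard part will be justifying the invocation of Proposition \ref{transfer.Tdeg}, which requires $T_w/Q$ to be $\Tdeg$-stable. Proposition \ref{qtorusmodmax} supplies this only when $Q$ is maximal, where $Z(T_w)/\mathfrak{p}$ is a finite field extension of $\mk$; for a general prime $Q$ one must show that $(Z(T_w)/\mathfrak{p}) \otimes_\mk C$ is $\Ld$-stable. This should be a modest extension of \cite[Corollary 6.3(1)]{z2} and of the argument in Proposition \ref{qtorusmodmax}, but verifying the inequality $\Ld((Z(T_w)/\mathfrak{p}) \otimes_\mk C) \ge \gk(Z(T_w)/\mathfrak{p}) + r$ — in effect, checking that $\Ld$-stability is preserved under tensoring an affine commutative domain with a central simple quantum torus over $\mk$ — is the technical crux of the approach.
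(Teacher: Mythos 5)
Your plan agrees with the paper on the key ingredients (the chain of $\ch$-primes from Proposition \ref{largerCauchon}, Theorem \ref{matchloc2}, the quantum-torus picture of the stratum, and Proposition \ref{transfer.Tdeg}), and the arithmetic giving $\gk(R/P)+\height(P)\ge N$ is sound \emph{provided} the transfer step is justified. But the transfer step has a genuine gap, which you yourself flag: to invoke Proposition \ref{transfer.Tdeg} with $B=R/P$ and $A=(R/P)\cf_w^{-1}=T_w/Q$, you must know that $T_w/Q$ is $\Tdeg$-stable for an \emph{arbitrary} prime $Q$ of the quantum torus $T_w$. The paper only proves this when $Q$ is maximal (Proposition \ref{qtorusmodmax}), and the maximality is used essentially there: when $\mathfrak{p}=Q\cap Z(T_w)$ is maximal, $Z(T_w)/\mathfrak{p}$ is a \emph{finite} field extension of $\mk$, so $B/\mathfrak{p}B=(Z(T_w)/\mathfrak{p})\otimes_\mk C$ is a finitely generated module over the central simple quantum torus $C$ and Lemma \ref{finiteextLd} applies. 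For non-maximal $\mathfrak{p}$ the factor $Z(T_w)/\mathfrak{p}$ has positive transcendence degree, the module is no longer finite over $C$, and you would instead need something like additivity of $\Ld$ under tensoring an affine commutative domain with a central simple quantum torus. That is not a ``modest extension'' of \cite[Corollary 6.3(1)]{z2}: the lower transcendence degree is notoriously delicate under tensor products, and no such additivity result is cited or proved in the sources the paper uses. So as written your argument does not close.

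The paper circumvents exactly this difficulty by a three-stage argument: first $\ch$-primes (Theorem \ref{Tauvel-Hprimes}), then \emph{primitive} ideals $P$ (where $Q=(P/J)\cf_w^{-1}$ is maximal and Proposition \ref{qtorusmodmax} applies, giving Theorem \ref{theorem-torsionfree}), and finally arbitrary $P$ by sandwiching $J\subseteq P\subseteq Q'$ with $Q'$ primitive and using catenarity inside the stratum (which comes free from the homeomorphism $\spec_J(R)\cong\spec(R/J)\cf_w^{-1}$ together with the known catenarity and height formula for quantum tori, \cite[Theorem II.9.14]{bg}). In effect, the catenarity of the stratum lets one deduce $\gk(R/P)$ from $\gk(R/Q')$ without ever having to compute $\gk$ of a non-maximal quotient of $T_w$ directly. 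I would recommend replacing the final step of your argument by this catenarity reduction rather than attempting to extend Proposition \ref{qtorusmodmax} to all primes.
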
 

\begin{proof} The height formula has been established for the $\ch$-prime ideals of $R$ 
in Theorem~\ref{Tauvel-Hprimes}. 

Next, we deal with the case where $P$ is a primitive ideal. Suppose that $J=J_w$ is the 
$\ch$-prime ideal such that $P\in\spec_J(R)$. Let $\cf_w$ be the Ore set contained 
in $R/J$ that is mentioned in Theorem~\ref{theorem-torsionfree}(a). Since $(R/J)\cf_w^{-1}$ is a quantum torus, its prime spectrum is catenary and Tauvel's height formula holds in this algebra (e.g., \cite[Theorem II.9.14]{bg}).  

Now 
 $\gk\bigl( (R/J)\cf_w^{-1} \bigr) = \gk(R/J)$ and $\gk\bigl((R/P)\cf_w^{-1} \bigr)= \gk(R/P)$ by Theorem~\ref{theorem-torsionfree}(c).
 Hence, 
 \[
\gk(R/P)=\gk\bigl((R/P)\cf_w^{-1} \bigr) =^1 \gk\bigl( (R/J)\cf_w^{-1} \bigr) -\height\bigl( (P/J)\cf_w^{-1} \bigr)
=\gk(R/J) - \height(P/J),
\]
where $(=^1)$ follows from Tauvel's height formula in $(R/J)\cf_w^{-1}$, and
so we obtain $\gk(R/J)=\gk(R/P)+\height(P/J)$.
Consequently, 
\begin{align*}
N &\geq\gk(R/P)+\height(P)\geq \bigl( \gk(R/J)-\height(P/J)\bigl)+
\bigl(\height(P/J)+\height(J)\bigr)  \\
&=\gk(R/J)+\height(J)=N;
\end{align*}
and so $\gk(R/P)+\height(P)=N$, as required.

Finally, let $P$ be an arbitrary prime ideal belonging to the $J$-stratum of $\spec(R)$, and let $Q$ be a 
maximal element of that stratum with $J\subseteq P\subseteq Q$. Then $Q$ is primitive by \cite[Theorem 4.4]{gl2}.

Within the stratum, we have catenarity due to the fact that
\[
\bigl( \spec_J(R), \subseteq \bigr) \cong \bigl( \spec \bigl( (R/J)\cf_w^{-1} \bigr), \subseteq \bigr).
\]
Consequently,
\begin{eqnarray*}
N
&\geq& 
\gk(R/P)+\height(P)\\
&\geq&
\bigl(\gk(R/Q)+\height(Q/P)\bigr)+
\bigl(\height(P/J)+\height(J)\bigr)\\
&=^1&
\gk(R/Q)+\bigl(\height(Q/J)-\height(P/J)\bigr)+
\height(P/J)+\height(J)\\
&=&
\gk(R/Q)+\height(Q/J)+
\height(J)\\
&=^2&
\gk(R/Q)+\bigl(\gk(R/J)-\gk(R/Q)\bigr)+\height(J)\\
&=&
\gk(R/J)+\height(J) =N,
\end{eqnarray*}
where in $(=^1)$ we are using catenarity within the stratum, and $(=^2)$ holds by the 
equality established in the primitive case above. Therefore 
\[
\gk(R/P)+\height(P)=N,
\]
as required.
\end{proof} 


\section{Examples}

We have shown that quantum nilpotent algebras satisfy Tauvel's height formula, but the question as to whether or not they are 
catenary remains open. We have also seen that in the presence of suitable homological conditions, normal separation implies catenarity and Tauvel's height formula. However, for algebras that are not quantum nilpotent algebras, the notion of catenarity and Tauvel's height formula are independent, as we see in the following examples.

\begin{example}{\rm 
Several examples are known of algebras which are catenary but do not satisfy Tauvel's height formula, such as the group algebra of the Heisenberg group (over any field) \cite[Example 3.8]{bell-sig} and the enveloping algebra of $\frak{sl}_2(\mk)$ (for $\mk$ algebraically closed of characteristic zero) \cite[p.411]{bell-sig}. We also point to \cite[Example 2.9]{bell-sig}:
Let $A:=\mk[x,y]$ where $\chr\mk=0$. Let $\delta$ be the $\mk$-linear derivation given by 
$\delta = (2y)\partial/\partial x +(x+y^2)\partial/\partial y$. Set $R:=A[z;\delta]$. 
The ring $R$ has Gelfand-Kirillov dimension three. However, 
the ideal $xR+yR$ is a prime ideal of height one, but $R/(xR+yR)\cong\mk[z]$ has Gelfand-Kirillov dimension one; so 
Tauvel's height formula fails for this ideal. 
This example was originally constructed by Jordan in \cite{jordan}. }
\end{example} 

A modification of the previous example produces an example that is not catenary and does not satisfy Tauvel's height formula.

\begin{example}
{\rm 
Let $A:=\mk[x,y,z]$  where $\chr\mk=0$ and $\delta$ is the $\mk$-linear derivation given by 
$\delta = (2yz)\partial/\partial x +(x+y^2)\partial/\partial y$. Set $R:=A[w;\delta]$. 
See \cite[Example 2.10]{bell-sig} for details. 
}
\end{example} 

It is easily seen that if Tauvel's height formula holds in all prime factors of an algebra $R$, then $R$ is catenary. In fact, it suffices to know that for any prime ideals $P \supsetneq Q$ of $R$ with $\height(P/Q)=1$, the equality $\gk(R/Q) = \gk(R/P)+1$ holds. Namely, this assumption implies that for any prime ideals $P \supsetneq Q$ of $R$, all saturated chains of prime ideals between $P$ and $Q$ have length $\gk(R/Q) - \gk(R/P)$.

It appears unlikely that Tauvel's height formula alone (holding just in an algebra rather than in all prime factors) implies catenarity, but no examples of non-catenary algebras with finite GK-dimension which satisfy Tauvel's height formula are known.

\begin{example}{\rm 
One might imagine that there would be a complementary result to Proposition \ref{largerCauchon} about replacing a white box by a black box. Specifically, if $w$ is a Cauchon diagram for $R$ and $w \ne \gc 1,N \dc$, one could conjecture that there exists a Cauchon diagram $w' \supsetneqq w$ such that $|w'\setminus w| = 1$ and $J_{w'} \supsetneqq J_w$. If such a result did hold then 
applying it iteratively starting
with $w$ empty would yield a chain of $\ch$-primes of length equal to
$\gk(R)$. That fails, e.g.~for the first quantised Weyl algebra, $R = A_1^q(\mk) := \mk \langle x_1,x_2 \mid x_2x_1 - qx_1x_2 = 1 \rangle$, with $q \in \mk^*$ not a root of unity. Here $\gk(R) = 2$ but
there are only two $\ch$-primes altogether, namely $0$ and $J := \langle x_1x_2 - x_2x_1 \rangle$. The Cauchon diagrams of $0$ and $J$ are $\varnothing$ and $\{1\}$, respectively. What goes wrong with the conjectured result is that $\{1,2\}$
is not a Cauchon diagram for $R$.
}
\end{example}


\section*{Acknowledgement}
We thank James Zhang for useful discussions about transcendence degrees.


\medskip


\linespread{1.0}
\selectfont

K R Goodearl:

Department of Mathematics,

University of California,

Santa Barbara, CA 93106, USA

Email: {\tt goodearl@math.ucsb.edu} 

\medskip

S Launois: 

School of Mathematics, Statistics and Actuarial Science,

University of Kent

Canterbury, Kent, CT2 7FS, UK

Email: {\tt S.Launois@kent.ac.uk}

\medskip

T H Lenagan: 

Maxwell Institute for Mathematical Sciences,

School of Mathematics, University of Edinburgh,

James Clerk Maxwell Building, King's Buildings, 

Peter Guthrie Tait Road,

Edinburgh EH9 3FD, Scotland, UK

E-mail: {\tt tom@maths.ed.ac.uk}

\end{document}